\newcommand{\ud}{\mathrm{d}}
\newcommand{\dev}{\partial}
\newcommand{\R}{\mathbb{R}}
\newcommand{\Z}{\mathbb{Z}}
\newcommand{\cA}{\mathcal{A}}
\newcommand{\M}{\mathcal{M}}
\newcommand{\cF}{\mathcal{F}}
\newcommand{\mdev}{\boldsymbol{\dev}}
\newcommand{\mlambda}{\boldsymbol{\lambda}}
\newcommand{\mmu}{\boldsymbol{\mu}}
\renewcommand{\L}{\mathcal{L}}
\renewcommand{\vec}[1]{\mathbf{#1}}
\newtheorem{thm}{Theorem}
\newtheorem{cor}[thm]{Corollary}
\theoremstyle{definition}
\newtheorem{deff}{Definition}
\newtheorem{rmk}{Remark}
\newtheorem{eg}{Example}
\DeclareMathOperator{\Der}{Der}
\DeclareMathOperator{\im}{Im}
\title{Higher order dispersive deformations of multidimensional Poisson brackets of hydrodynamic type}
\author{Matteo Casati}
\affil{Marie Curie fellow of the Istituto Nazionale di Alta Matematica\\Department of Mathematical Sciences, Loughborough University\\Loughborough (United Kingdom) LE11 3TU}
\begin{document}
\maketitle
\begin{abstract}
The theory of multidimensional Poisson vertex algebras (mPVAs) provides a completely algebraic formalism to study the Hamiltonian structure of PDEs, for any number of dependent and independent variables. In this paper, we compute the cohomology of the PVAs associated with two-dimensional, two-components Poisson brackets of hydrodynamic type at the third differential degree. This allows us to obtain their corresponding Poisson--Lichnerowicz cohomology, which is the main building block of the theory of their deformations. Such a cohomology is trivial neither in the second group, corresponding to the existence of a class of not equivalent infinitesimal deformation, nor in the third, corresponding to the obstructions to extend such deformations.
\end{abstract}
\paragraph*{Keywords}  Hamiltonian operators, Hydrodynamic Poisson brackets, Poisson Vertex Algebras
\paragraph*{MSC} 37K05 (primary), 37K25, 17B80
\section{Introduction}\label{sec:intro}

In a previous paper of ours \cite{MC15-1}, we introduced the notion of multidimensional Poisson Vertex Algebras (mPVAs) and demonstrated how they prove themselves a powerful tool for dealing with Poisson structures on a space of functions depending on more than one (space) variables. Such a general class of brackets, whose elements are the Hamiltonian structures for evolutionary PDEs, is of paramount importance for the study of the integrability properties of the equations. It is possible, for instance, to prove the integrability for many of such PDEs provided that they are Hamiltonian with respect to two compatible Poisson structures \cite{M78}.

Among all the Poisson brackets defined on a given space of functions, that formally is identified by a fixed number $n$ of components and $d$ space variables, the most studies class is the one of the so-called Poisson brackets \emph{of hydrodynamic type}, namely the ones that are defined in terms of homogeneous differential operators of first order. In particular, there exists a vast literature about Poisson brackets of hydrodynamic type for $d=1$ \cite{DN83, G01, dGMS05, DZ}, since they provide Hamiltonian structures for several integrable PDEs (Korteweg-de Vries \cite{g71}, Benney moment chain \cite{B73}, Harry Dym \cite{M80} to name a few). Their general form is \cite{DN83}
\begin{equation}\label{eq:htpb-intro}
\{u^i(x),u^j(y)\}=\left(g^{ij}(u(x))\frac{\ud}{\ud x}+b^{ij}_ku^k_x\right)\delta(x-y)
\end{equation}
for $i,j=1,\ldots,n$. The properties that $g$ and $b$ must satisfy for the bracket defined in \eqref{eq:htpb-intro} to be nondegenerate and Hamiltonian guarantee the existence of a change of coordinates of the form $v^i(x)=f^i(u(x))$ such that the bracket takes the form
\begin{equation}\label{eq:htpbflat-intro}
\{v^i(x),v^j(y)\}=\eta^{ij}\frac{\ud}{\ud x}\delta(x-y)
\end{equation}
with $\eta^{ij}$ constant.

We call a \emph{dispersive deformation} of a Poisson bracket of hydrodynamic type a bracket defined by an higher order differential operator which is compatible, up to its order, with the first order one. In general, they have the form
\begin{equation}\label{eq:gendefo-intro}
\{u^i(x),u^j(y)\}=\{u^i(x),u^j(y)\}_0+\sum_k\{u^i(x),u^j(y)\}_{[k]}
\end{equation}
with $\{u^i(x),u^j(y)\}_0$ of hydrodynamic type. It is a well known fact, proved by several authors \cite{G01,dGMS05,DZ}, that all the deformations of Poisson brackets of hydrodynamic type for $d=1$ are trivial, namely there exists a change of coordinates of the form
\begin{equation}
v^i=u^i+\sum_{k=1}^\infty F^i_{[k]}(u,u_x,\ldots,u_{kx}),
\end{equation}
called a general Miura type transformation, for which the deformed bracket takes the form \eqref{eq:htpb-intro}.

The classification of all the compatible deformations of some Poisson bracket of hydrodynamic type, up to Miura transformation, is closely connected with the computation of the Poisson-Lichnerowicz cohomology of the Poisson bracket of hydrodynamic type itself. In the $d=1$ case, the triviality result follows, indeed, from the fact that all the cohomology group $H^p$ of a bracket of form \eqref{eq:htpbflat-intro} are trivial for $p>1$, and that $H^1$ is of a very special form. The precise connection between the Poisson-Lichnerowicz cohomology and the theory of the deformations will be discussed in Section 2.1. In short, the nontrivial infinitesimal deformations (for which the terms $\{u^i(x),u^j(y)\}_{[k]}$ of formula \eqref{eq:gendefo-intro} vanish for all but one value of $k$) are parametrized by the second cohomology group, and the elements of the third group are the obstructions to the extension from an infinitesimal deformation to a full deformation, adding higher order terms.

The natural generalisation of this classification problem is the study the deformations of \emph{multidimensional} Poisson brackets of hydrodynamic type, for which $d>1$. The landscape is at the same time richer and much more complicated, since there is not a preferred coordinate system where the bracket can be brought in constant form \cite{DN84, M08}. In this paper we focus on the case $d=2$, where the Poisson brackets of hydrodynamic type have been completely classified up to $n=3$ components \cite{FLS15}. In particular we have investigated the $n=2$ component case, where there are three normal forms for the bracket of hydrodynamic type.
 
There, we computed the first and second groups of the Poisson cohomology: the results reported in \cite{MC15-1} hinted that the picture might not be the same in the two-dimensional case: at the first dispersive order we found that the first cohomology group of two-dimensional, two-components Poisson brackets of hydrodynamic type is in general not trivial. Nevertheless, the component of the second cohomology group corresponding to the infinitesimal deformations of first order ($k=1$ of \eqref{eq:gendefo-intro}, or brackets homogeneous of differential degree 2) is trivial.

First and second order deformations of a scalar reduction of a two-component bracket of hydrodynamic type are trivial, too \cite{MC16}.

However, a deeper investigation shows that, in general, the two-dimensional picture is much more complicated than the one-dimensional case. Already for scalar, i.e. one-component, brackets the dimensions of the cohomology groups grow as the degree of the components grows, ultimately making the Poisson cohomology infinite dimensional \cite{CCS15}.  

In this paper, we present the results for the computation of the second degree component of the second Poisson cohomology group for $(d=2,n=2)$ brackets of hydrodynamic type. Such group is never trivial, hence there exist infinitesimal deformations of the brackets that are not trivial. However, as we found for the deformations in the scalar case \cite{CCS17}, very few of these infinitesimal deformations can be extended to a finite one. 

\subsection{Brackets of hydrodynamic type and Poisson Vertex Algebras}
The notion of Poisson brackets of hydrodynamic type has been introduced in the early 1980s by Dubrovin and Novikov \cite{DN83} as a unifying picture for a large class of integrable Hamiltonian PDEs, and more in general to describe in field-theoretical terms Hamiltonian systems with internal degrees of freedom.

In this paragraph we recall their definition, both in the one- and in the multi-dimensional case, and we briefly summarise the discussion already presented in \cite{MC15-1} explaining why and how multidimensional Poisson Vertex Algebras are a convenient equivalent language to deal with the problem addressed in this paper.

Let $M$ be a compact, $n$-dimensional real manifold. Let us consider $L(M)=C^\infty(S^1,M)$ the \emph{loop space} of $M$, namely the space of periodic functions of one variable with values in the manifold. The elements of $L(M)$ are $n$-tuples of periodic functions $\{u^i(x)\}_{i=1}^n$, where $\{u^i\}$ are local coordinates on the manifold $M$. A (one-dimensional) Poisson bracket of hydrodynamic type on $L(M)$ is an homogeneous first order differential operator acting on the Dirac's delta
\begin{equation}\label{eq:HYPB-def1}
\{u^i(x),u^j(y)\}=\left(g^{ij}(u(x))\frac{\ud}{\ud x}+b^{ij}_k(u(x))u^k_x\right)\delta(x-y)
\end{equation}
that endows the space $\cF$ of local functionals on the loop space with the structure of a Lie algebra.

Given local functionals of the form
\begin{equation}\label{eq:locfunc-1}
F=\int_{S^1} f(u(x),\dev_x u(x),\dev^2_x u(x),\ldots,\dev^k_xu(x))\ud x,
\end{equation}
the bracket on $\cF$ is defined by
\begin{equation}\label{eq:LieB-def1}
\{F,G\}=\iint \frac{\delta F}{\delta u^i(x)}\{u^i(x),u^j(y)\}\frac{\delta G}{\delta u^j(y)}\ud x\ud y,
\end{equation}
which in the case of a bracket of hydrodynamic type reads
\begin{equation}\label{eq:LieB-ht1}
\{F,G\}=\int\frac{\delta F}{\delta u^i}\left(g^{ij}\frac{\ud}{\ud x}\frac{\delta G}{\delta u^j}+b^{ij}_ku^k_x\frac{\delta G}{\delta u^j}\right)\ud x.
\end{equation}
We use a formal version of the space of local functionals, as introduced by Gel'fand and {Diki\u\i} \cite{GD75}. Let us consider the coordinates $\{u^i\}$ on $M$ and their corresponding jet variables $\{u^i_{kx}\}$, where $\dev^k_xu^i=u^i_{kx}$. We allow the \emph{densities} $f$ of the local functionals to be differential polynomials, namely elements of the space
$$
\cA:=C^\infty(u^i)[u^i_{kx}]\qquad\qquad i=1,\ldots,n,\; k\geq 1.
$$ 
We regard the integration as a projection map between the space of differential polynomials and the space of local functionals, whose kernel are total derivatives and where there are never boundary terms because of the assumed periodicity of the functions. Hence,
$$
\cF=\frac{\cA}{\dev_x\cA},
$$
and the variational derivative is formally defined by the Euler-Lagrange formula
\begin{equation}
\frac{\delta F}{\delta u^i}=\sum_{k\geq 0}\left(-\frac{\ud}{\ud x}\right)^k\frac{\dev f}{\dev u^i_{kx}}.
\end{equation}

The requirement that the bracket at the LHS of \eqref{eq:LieB-ht1} is skewsymmetric and fulfils Jacobi's identity dictates a set of conditions for the functions $g^{ij}(u)$ and $b^{ij}_k(u)$. As first proved by Dubrovin and Novikov \cite{DN83}, in the nondegenerate case $g^{ij}$ must be a flat contravariant pseudo-Riemannian metric on $M$ and $b^{ij}_k$ the contravariant Christoffel symbols of its Levi-Civita connection.

The generalisation of this picture to the multidimensional case is straightforward, but it already introduces new constraints that make it more technically cumbersome. We consider the space $\M$ of the maps between a $d$-dimensional compact manifold $\Sigma$ and $M$. In a local system of coordinates, they are $\{u^i(x^1,\ldots,x^d)\}_{i=1}^n$. To avoid topological problems when working with the local functionals, let us assume that $\Sigma\cong T^d$, so that the integration by parts is always allowed variable-wise. The suitable space of differential polynomials we have to consider when working in a multidimensional setting requires working with jet variables with respect to all the $d$ total derivatives, for which it is convenient to adopt a multi-index notation as follows. Let $\cA$ be the space of differential polynomials. We have
$$\cA=C^\infty(\{u^i\}_{i=1}^n)[\{u^i_L, i=1\ldots n,\,L\in \Z^d_{\geq 0}\}],
$$
where $L=(l_1,\ldots,l_d)$ is an index counting the number of derivatives with respect to each independent variables, namely
$$
u^i_L=\dev_{x^1}^{l_1}\dev_{x^2}^{l_2}\cdots\dev_{x^d}^{l_d}u^i.
$$
The corresponding space of local functionals is
$$
\cF=\frac{\cA}{\sum_{\alpha=1}^d\dev_{x^\alpha}\cA}
$$
and, consequently, the variational derivative is
\begin{equation}
\frac{\delta F}{\delta u^i}=\sum_{l_1,\ldots,l_d}\left(\prod_{\alpha=1}^d\left(-\frac{\ud}{\ud x^\alpha}\right)^{l_\alpha}\right)\frac{\dev f}{\dev u^i_L}=\sum_L\left(-\mdev\right)^L\frac{\dev f}{\dev u^i_L},
\end{equation} where we also denoted the shorthand notation we will adopt where no ambiguity can arise.

The general form of a multidimensional Poisson bracket of hydrodynamic type is \cite{DN84}
\begin{equation}\label{eq:mHYPB-def}
\{u^i(\vec{x}),u^j(\vec{y})\}=\sum_{\alpha=1}^d\left(g^{\alpha ij}(u(\vec{x}))\frac{\ud}{\ud x^\alpha}+b^{\alpha ij}_ku^k_\alpha\right)\delta(x^1-y^1)\cdots\delta(x^d-y^d),
\end{equation}
where $u^i_\alpha=\dev_{x^\alpha} u^i$. If $g^\alpha$ are invertible matrices, it is defined by $d$ flat pseudo-Riemannian metrics, where $b^\alpha$'s are the contravariant Christoffel symbols of the corresponding Levi-Civita connections, subjected to additional compatibility conditions \cite{M88}. We introduce the standard Christoffel symbols $\Gamma_{\alpha ij}^k=-g^\alpha_{is}b^{\alpha sk}_j$ and their corresponding Riemannian curvature tensors $R^{\alpha l}_{ijk}$, and define the obstruction tensors $T^{\alpha\beta i}_{jk}=\Gamma^{\beta i}_{jk}-\Gamma^{\alpha i}_{jk}$ and $T^{\alpha\beta ijk}=g^{\alpha ia}g^{\beta kb}T^{\alpha\beta j}_{ab}$. Then, the additional compatibility conditions are
\begin{subequations}
\begin{gather}
T^{\alpha\beta ijk}=T^{\alpha \beta kji}\\
T^{\alpha\beta ijk}+T^{\alpha\beta jki}+T^{\alpha\beta kji}=0\\
T^{\alpha\beta ijl}T^{\alpha\beta k}_{lr}=T^{\alpha\beta ikl}T^{\alpha\beta j}_{lr}\\\label{eq:condlinear}
\nabla^\alpha_r T^{\alpha\beta ijk}=0.
\end{gather}
\end{subequations}
The main difference with the one-dimensional case is that, while in the former, because of the flatness of the matric, there exists a system of coordinates in which the bracket has constant form and the Christoffel symbols vanish, in the multidimensional case this can happen only if the obstruction tensors vanish. However, Equation \eqref{eq:condlinear} guarantees that, in the system of coordinates in which at least one of the $d$ metrics is constant, the other ones are at most linear. This fact is of crucial importance for the classification of the brackets.

Proving the results sketched in this Paragraph is mostly a matter of computation, whose complexity steadily grows with the number of dependent and independent variables taken into account. As previously claimed \cite{BdSK09, MC15-1}, the notion of Poisson Vertex Algebra is a very powerful algebraic counterpart of Poisson brackets in the space of local functionals, so we decide to briefly recall the definition of multidimensional Poisson Vertex Algebras and to cast our problem using their language.

\begin{deff}\label{def:PVA}
A $d$-dimensional Poisson Vertex Algebra (PVA) is a differential algebra $(\cA,\{\dev_\alpha\}_{\alpha=1}^d)$ endowed with  $d$ commuting derivations and with a bilinear operation $\{\cdot_{\mlambda}\cdot\}\colon \cA\otimes \cA\to \R[\lambda_1,\ldots,\lambda_d]\otimes \cA$ called the $\lambda$ \emph{bracket}, taking value in the polynomials of $d$ formal indeterminates $\lambda_\alpha$ with coefficients in $\cA$ and satisfying the following set of properties:
\begin{enumerate}
\item $\{\dev_\alpha f_{\mlambda} g\}=-\lambda_\alpha\{f_{\mlambda} g\}$
\item $\{f_{\mlambda}\dev_\alpha g\}=\left(\dev_\alpha+\lambda_\alpha\right)\{f_{\mlambda} g\}$
\item $\{f_{\mlambda} gh\}=\{f_{\mlambda} g\}h+\{f_{\mlambda} h\}g$
\item $\{fg_{\mlambda} h\}=\{f_{\mlambda+\mdev}h\}g+\{g_{\mlambda+\mdev}h\}f$
\item $\{g_{\mlambda}f\}=-{}_{\to}\{f_{-\mlambda-\mdev}g\}$ (PVA-skewsymmetry)
\item $ \{f_{\mlambda}\{g_{\mmu}h\}\}-\{g_{\mmu}\{f_{\mlambda}h\}\}=\{\{f_{\mlambda}g\}_{\mlambda+\mmu}h\}$ (PVA-Jacobi identity).
\end{enumerate}
\end{deff}
For our purposes, we consider PVAs where the differential algebra is precisely the space of differential polynomials, and the $d$ derivations $\{\dev_\alpha\}$ correspond to the derivatives with respect to the independent variables.

To read the properties and the following computation, let us recall that the $\lambda$ bracket between two differential polynomials is a polynomial in $\lambda$'s. Using the same multi-index notation we have already adopted, let $\mlambda^L=\lambda_1^{l_1}\lambda_2^{l_2}\cdots\lambda_d^{l_d}$ and $\{f_{\mlambda}g\}=\sum_L B(f,g)_L \mlambda^L$, where $B(f,g)_L\in\cA$. Hence, the terms in the RHS of Property (4) are of the form $\{f_{\mlambda+\mdev}h\}g=\sum_LB(f,h)_L(\mlambda+\mdev)^Lg=\sum_LB(f,h)_L(\lambda_1+\dev_1)^{l_1}\cdots(\lambda_d+\dev_d)^{l_d}g$. Similarly, the skewsymmetry property is $\sum_L B(g,f)_L\mlambda^L=-\sum_L(-\mlambda-\mdev)^L B(f,g)_L$.

The set of axioms for the PVA translates into a practical formula that gives the bracket between two elements of $\cA$ in terms of the bracket between the generators $u^i$, and that it is referred to as the \emph{master formula} \cite{BdSK09}.
\begin{equation}\label{eq:master}
  \{f_{\mlambda}g\}=\sum_{\substack{i,j=1\ldots,N\\L,M\in\Z^D_{\geq 0}}}\frac{\dev g}{\dev u^j_M}(\mlambda+\mdev)^M\{u^i_{\mlambda+\mdev}u^j\}(-\mlambda-\mdev)^L\frac{\dev f}{\dev u^i_L}.
\end{equation} 

The relation between the notion of Poisson Vertex Algebra and the formal variational calculus is given by an isomorphism between the Poisson Vertex Algebras and the Poisson bracket on the space of local functionals \cite[Theorem 2 and 3]{MC15-1}. This means that working in the category of PVAs is equivalent to working with Poisson brackets in the language of the formal calculus of variation; at the same time, it is worthy noticing that in the axioms and in the formulas of Poisson Vertex Algebras there are not integrations to be performed, while all the computations that will be required in dealing with our problem can be carried out by the direct application of Formula \eqref{eq:master}, which is easily coded in a Computer Algebra System. In particular, given a Poisson bracket in the space of the local densities $\cA$ 
$$\{u^i(x),u^j(y)\}=\sum_L B^{ij}_L(u(x);u_M)\mdev^L\delta(x-y),$$ we can define the $\lambda$ bracket between the generators of $\cA$, corresponding to the coordinates on the manifold $M$, by
\begin{equation}\label{eq:defLambdaB}
 \{u^i_{\mlambda}u^j\}=\sum_S B^{ji}_S(u;u_M)\mlambda^S
\end{equation}
and extend it to the full space by means of the master formula.

Hence, the multidimensional Poisson Vertex Algebra corresponding to a Poisson bracket of hydrodynamic type as \eqref{eq:mHYPB-def} is defined by the $\lambda$ bracket
\begin{equation}\label{eq:mHYPB-defPVA}
\{u^i{}_{\mlambda}u^j\}=\sum_{\alpha=1}^d \left(g^{\alpha j i}\lambda_\alpha+b^{\alpha j i}_k u^k_\alpha\right).
\end{equation}

\subsection{Two-dimensional Poisson brackets of hydrodynamic type}
In this paragraph we specialise the previous definitions to the case of two-dimensional, two-components brackets of hydrodynamic type. Moreover, we recall the classification by Ferapontov and collaborators \cite{FOS11}, which we consider as the starting point of the following discussion.

For simplicity, we take as generators of $\cA$ the two coordinates $(p^1,p^2)=(p,q)$ and denote the two independent variables $(x^1,x^2)$ as $(x,y)$. The derivations with respect to $(x,y)$ and the corresponding jet variables will be denoted by $\dev_x^m\dev_y^n p=p_{mxny}$.

Explicitly, the conditions that $(g^x,g^y,b^x,b^y)$ must satisfy are
\begin{subequations}\label{eq:Mokhov}
\begin{gather}\label{eq:M1}
g^{\alpha ij}=g^{\alpha ji},\\\label{eq:M2}
\frac{\dev g^{\alpha ij}}{\dev p^k}=b^{\alpha ij}_k+b^{\alpha ji}_k,\\\label{eq:M3}
\sum_{(\alpha,\beta)}\left(g^{\alpha ai}b^{\beta jk}_a-g^{\beta aj}b^{\alpha ik}_a\right)=0,\\ \label{eq:M4}
\sum_{(i,j,k)}\left(g^{\alpha ai}b^{\beta jk}_a-g^{\beta aj}b^{\alpha ik}_a\right)=0,\\ \label{eq:M5}
\sum_{(\alpha,\beta)}\left[g^{\alpha ai}\left(\frac{\dev b^{\beta jk}_a}{\dev p^r}-\frac{\dev b^{\beta jk}_r}{\dev p^a}\right)+b^{\alpha ij}_ab^{\beta ak}_r-b^{\alpha ik}_ab^{\beta aj}_r\right]=0,\\ \label{eq:M6}
g^{\beta ai}\frac{\dev b^{\alpha jk}_r}{\dev p^a}-b^{\beta ij}_ab^{\alpha ak}_r-b^{\beta ik}_ab^{\alpha ja}_r=g^{\alpha aj}\frac{\dev b^{\beta ik}_r}{\dev p^a}-b^{\alpha ja}_ab^{\beta ak}_r-b^{\alpha jk}_ab^{\beta ia}_r,\\ \notag
\frac{\dev}{\dev p^s}\left[g^{\alpha ai}\left(\frac{\dev b^{\beta jk}_a}{\dev p^r}-\frac{\dev b^{\beta jk}_r}{\dev p^a}\right)+b^{\alpha ij}_ab^{\beta ak}_r-b^{\alpha ik}_ab^{\beta aj}_r\right]\\ \label{eq:M7}
+\frac{\dev}{\dev p^r}\left[g^{\beta ai}\left(\frac{\dev b^{\alpha jk}_a}{\dev p^s}-\frac{\dev b^{\alpha jk}_s}{\dev p^a}\right)+b^{\beta ij}_ab^{\alpha ak}_s-b^{\beta ik}_ab^{\alpha aj}_s\right]\\\notag
+\sum_{(i,j,k)}\left[b^{\beta ai}_r\left(\frac{\dev b^{\alpha jk}_s}{\dev p^a}-\frac{\dev b^{\alpha jk}_a}{\dev p^s}\right)\right]+\sum_{(i,j,k)}\left[b^{\alpha ai}_s\left(\frac{\dev b^{\beta jk}_r}{\dev p^a}-\frac{\dev b^{\beta jk}_a}{\dev p^r}\right)\right]=0.
\end{gather}\end{subequations}
The greek indices $(\alpha,\beta)$ can take the values $(x,y)$ and the symbol $\sum_{(a,b,c)}$ means the sum over the cyclic permutation of the indices.

The two-dimensional, two-components brackets of hydrodynamic type that fulfil the conditions \eqref{eq:Mokhov} have been classified by Ferapontov, Odesskii and Stoilov up to Miura transformations (namely, change of coordinates in the space $\cA$) and linear change of the independent variables $(x,y)$ \cite{FOS11}. They are of three types, for which we write here the $\lambda$ brackets of the corresponding PVAs.
\begin{align}\label{eq:P1}
\{p^i{}_{\mlambda}p^j\}_1&=\begin{pmatrix} \lambda_x & 0 \\ 0 &\lambda_y\end{pmatrix}\\\label{eq:P2}
\{p^i{}_{\mlambda}p^j\}_2&=\begin{pmatrix} 0 & \lambda_x \\ \lambda_x &\lambda_y\end{pmatrix}\\ \label{eq:PLP}
\{p^i{}_{\mlambda}p^j\}_{LP}&=\begin{pmatrix} 2p & q \\ q &0\end{pmatrix}\lambda_x+\begin{pmatrix} 0 & p \\ p &2q\end{pmatrix}\lambda_y+\begin{pmatrix} p_x & q_x \\ p_y &q_y\end{pmatrix}
\end{align}
In particular, it is worthy noticing that the first two structure are constant ones; this means that all the nonconstant (or equivalently, that cannot be brought to constant form) two-dimensional two-components Poisson brackets of hydrodynamic type are equivalent to \eqref{eq:PLP} -- a structure introduced by Novikov \cite{N82} and called \emph{Lie--Poisson bracket of hydrodynamic type}.
\section{Dispersive deformations in the language of Poisson Vertex Algebras}
Let us introduce on $\R[\mlambda]\otimes\cA$ a grading corresponding to the order of the differential operator acting on the Dirac's delta corresponding to the $\lambda$ bracket. This means that $\deg\dev_\alpha=1$, $\deg\lambda_\alpha=1$, and $\deg p^i_{mxny}=m+n$. The latter identity provides the standard grading on $\cA$, which is inherited by $\cF$ since the total derivatives are homogeneous operators of degree 1.
\begin{deff}A A \emph{$n$-th order dispersive deformation} of a PVA $(\cA,\{\cdot_{\mlambda}\cdot\})$ is a PVA defined by a deformed $\lambda$-bracket
\begin{equation}\label{eq:defoLambda_def}
 \{\cdot_{\mlambda}\cdot\}^\sim=\{\cdot_{\mlambda}\cdot\}+\sum_{k=1}^n\epsilon^k\{\cdot_{\mlambda}\cdot\}_{[k]}
\end{equation}
such that $\{\cdot_{\mlambda}\cdot\}^\sim$ is PVA-skewsymmetric and the PVA-Jacobi identity holds up to order $n$, namely
\begin{equation*}
 \{f_{\mlambda}\{g_{\mmu}h\}\}-\{g_{\mmu}\{f_{\mlambda}h\}\}-\{\{f_{\mlambda}g\}_{\mlambda+\mmu}h\}=O(\epsilon^{n+1}).
\end{equation*}
We require that the deformed bracket is homogeneous when we assign to the deformation parameter $\epsilon$ the degree $-1$. Moreover, we call such a deformation a \emph{finite deformation} if the PVA-Jacobi identity is satisfied to the order $2n$, namely if the deformed bracket is the $\lambda$ bracket of a PVA.
\end{deff}

A general Miura type transformation is a change of coordinates on $\cA$, compatible with the derivations. We can define the \emph{Miura group} as the group of transformations of form
\begin{equation}\label{eq:Miura}
 \begin{split}
  &p^i\mapsto \tilde{p}^i=\sum_{k=0}^\infty\epsilon^kF^i_{[k]}(p;p_{mxny})\quad m+n\leq k\\
&F^i_{[k]}\in\cA ,\quad\deg F^i_{[k]}=k\\
&\det\left(\frac{\dev F^i_{[0]}(p)}{\dev p^j}\right)\neq0.
 \end{split}
\end{equation}
In particular, we are interested in the so-called \emph{Miura transformations of the second kind}, that are the ones where $F^i_{[0]}=p^i$.

A deformation is said to be \emph{trivial} if there exists an element $\phi_\epsilon$ of the group \eqref{eq:Miura} which pulls back $\{\cdot_{\mlambda}\cdot\}^\sim$ to $\{\cdot_{\mlambda}\cdot\}$,
\begin{equation*}
 \{\phi_\epsilon(a)_{\mlambda}\phi_\epsilon(b)\}=\phi_\epsilon\left(\{a_{\mlambda}b\}\right),\qquad\forall a,b\in\cA.
\end{equation*}
We notice that such an element of the Miura group, if it exists, must be of the second kind. Equivalently, a deformation of order $n$ is trivial if there exists an evolutionary vector field $X$, namely a derivation of $\cA$ commuting with the total derivatives, such that $\{u^i{}_{\mlambda}u^j\}^\sim-\{u^i{}_{\mlambda}u^j\}=\{X(u^i){}_{\mlambda}u^j\}+\{u^i{}_{\mlambda}X(u^j)\}-X(\{u^i{}_{\mlambda}u^j\})+O(\epsilon^{n+1})$.

\subsection{Cohomology of a PVA and the theory of deformations}
As it happens in the standard theory of deformations, and in particular in finite dimensional Poisson geometry, we can extract a lot of informations by the cohomology associated to the undeformed bracket. The general theory of the cohomology of a PVA is presented in great detail in \cite{dSK13-2}, albeit only for the case of one-dimensional PVAs. Since the difference in the definitions in only technical, we will sketch here only the main ideas, which are shared with the classical theory of Poisson-Lichnerowicz cohomology \cite{L77}.

The cochain complex whose cohomology we are interested in is called the \emph{variational complex}; it can be proved that it is equivalent to the complex of local multivector fields \cite{DZ}. Denoting by $\Omega^k$ its levels, it can be proved that $\Omega^0\cong\cF$ and $\Omega^1\cong\Der^\dev(\cA)$, namely the the space of evolutionary vector fields. Moreover and crucially, the space of skewsymmetric $\lambda$ brackets is isomorphic to the space of local 2-vectors, and the property of satisfying the Jacobi identity of a PVA [Definition 1.6] is equivalent to the Schouten relation $[P,P]=0$ for local bivectors. Moreover, denoted by $P$ the element of the variational complex corresponding to the $\lambda$ bracket of a PVA, its adjoint action on the complex is a differential -- that we denote by $\ud_P$ -- namely it squares to 0. This allows us to define the \emph{cohomology of the PVA} $(\cA,\{\dev_\alpha\},\{\cdot{}_{\mlambda}\cdot\}_P=:P)$ as
\begin{equation}
H^k(P)=\frac{\ker \ud_P:\Omega^k\to\Omega^{k+1}}{\im \ud_P:\Omega^{k-1}\to\Omega^k}.
\end{equation}
Each of the cohomology groups $H^k$ can be filtered according to the previously defined differential grading. We will denote by $H^k_{[n]}$ the component of degree $n$ of the $k$-th cohomology group.

Let us consider the action of $\ud_P$ on evolutionary vector fields (namely, 1-cochains) and skewsymmetric $\lambda$ brackets (namely, 2-cochains). Let $X$ be an evolutionary vector field. It has the form
\begin{equation}\label{eq:evVF}
X=\sum_{i=1}^n\sum_L\left(\mdev^L X^i(u,u_M)\right)\frac{\dev}{\dev u^i_L}
\end{equation}
where $X^i\in\cA$ and $X(u^i)=X^i$. We have $\deg X=\deg X^i$. Then, we denote a skewsymmetric $\lambda$ bracket by $\{\cdot{}_{\mlambda}\cdot\}_K=:K$. We have
\begin{align}\label{eq:adPX}
\ud_P(X)(u^i,u^j;\mlambda)&=\{X^i{}_{\mlambda}u^j\}_P+\{u^i{}_{\mlambda}X^j\}_P-X\left(\{u^i{}_{\mlambda}u^j\}_P\right),\\\notag
\ud_P(K)(u^i,u^j,u^k;\mlambda,\mmu)&=\left\{u^i{}_{\mlambda}\{u^j{}_{\mmu}u^k\}_K\right\}_P-\left\{u^j{}_{\mmu}\{u^i{}_{\mlambda}u^k\}_K\right\}_P\\\label{eq:adPK}
&\quad -\left\{\{u^i{}_{\mlambda}u^j\}_K{}_{\mlambda+\mmu}u^k\right\}_P+\left\{u^i{}_{\mlambda}\{u^j{}_{\mmu}u^k\}_P\right\}_K\\\notag
&\quad-\left\{u^j{}_{\mmu}\{u^i{}_{\mlambda}u^k\}_P\right\}_K-\left\{\{u^i{}_{\mlambda}u^j\}_P{}_{\mlambda+\mmu}u^k\right\}_K.
\end{align}
From \eqref{eq:adPX} it follows that a trivial deformation of a PVA is in the image of $\ud_P$. Moreover, let us consider an homogeneous dispersive deformation of order $n$, namely a deformation of the form $\{\cdot{}_{\mlambda}\cdot\}^\sim=\{\cdot{}_{\mlambda}\cdot\}_P+\epsilon^n\{\cdot{}_{\mlambda}\cdot\}_{[n]}$. The PVA-Jacobi identity at the order $\epsilon^n$ is exactly $\ud_P([n])=0$. This means that the nontrivial deformations of this form must be a cocycle in $H^2_{n+\deg P}(P)$.

A similar argument applies if the deformation is of the general form \eqref{eq:defoLambda_def}. Here, let $k_1$ be the lowest degree of the deformation and $n$ the highest. The expression for the PVA-Jacobi identity can be expanded in powers of $\epsilon$ and will have an expansion $J=\sum_{k=k_1}^{2n}\epsilon^k J_k$, with
\begin{multline}
J_k=\sum_{l+m=k}\left(\left\{u^i{}_{\mlambda}\{u^j{}_{\mmu}u^k\}_{[l]}\right\}_{[m]}-\left\{u^j{}_{\mmu}\{u^i{}_{\mlambda}u^k\}_{[l]}\right\}_{[m]}
-\left\{\{u^i{}_{\mlambda}u^j\}_{[l]}{}_{\mlambda+\mmu}u^k\right\}_{[m]}\right.\\\left.+\left\{u^i{}_{\mlambda}\{u^j{}_{\mmu}u^k\}_{[m]}\right\}_{[l]}-\left\{u^j{}_{\mmu}\{u^i{}_{\mlambda}u^k\}_{[m]}\right\}_{[l]}
-\left\{\{u^i{}_{\mlambda}u^j\}_{[m]}{}_{\mlambda+\mmu}u^k\right\}_{[l]}\right),
\end{multline}
with the identification $\{\cdot{}_{\mlambda}\cdot\}_{[0]}=\{\cdot{}_{\mlambda}\cdot\}_{P}$. Since for a dispersion of order $n$ we require $J_k=0$ for $k_1\leq k\leq n$, the first condition to be imposed is $\ud_P([k_1])=0$. This means that the first nontrivial term of the deformation must be a cocycle in $H^2_{k_1+\deg P}(P)$. If the term of degree $k_1+\deg P$ is trivial, this means that there exists a Miura transformation that maps the deformed bracket into a bracket where that term vanishes -- and the cocycle condition must hold for the new lowest degree term of the deformation.

According to the already known results, a major difference exists between the one-dimensional and the multi-dimensional Poisson brackets of hydrodynamic type with respect to the theory of the deformations. A theorem by Getzler \cite{G01}, holding true for the full cohomology, as well as the particular results obtained independently by Degiovanni, Magri and Sciacca \cite{dGMS05} and by Dubrovin and Zhang \cite{DZ}, grants that the second cohomology group is trivial for all the one-dimensional brackets of hydrodynamic type. This means that all the dispersive deformations of such brackets can be brought back to the undeformed form by a Miura transformation of the second kind. Moreover, the vanishing of the third cohomology group guarantees that all the dispersive deformations can be extended to a finite deformation. While the results already obtained by the author for two-component two-dimensional brackets of hydrodynamic type \cite{MC15-1} did not allow to conclude anything about the triviality of the dispersive deformations of the bracket, they already showed that the first cohomology group is not vanishing in the second degree, as opposite as Getzler's result. Moreover, a careful study of one-component, two-dimensional brackets \cite{CCS15} tells that the cohomology groups ($H^{\geq1}$) are infinite dimensional, in particular revealing the normal form of the finite Poisson brackets of arbitrary differential order \cite{CCS17}.

The computations discussed in the next Section show that $H^2$ is not trivial -- in particular, for all the three two-dimensional two-components brackets of hydrodynamic type the first non-empty component is $H^2_3$, characterising the nontrivial deformations of second order.
\section{Cohomology groups for second order deformations}\label{sec:coho}
Since the differential order of the Poisson brackets of hydrodynamic type is 1, second order deformations are cocycles of degree $1+2=3$. In this section we compute the dimension of $H^2_3(P_1)$, $H^2_3(P_2)$, and $H^2_3(P_{LP})$, where the three $\lambda$ brackets are respectively the ones given in \eqref{eq:P1}, \eqref{eq:P2}, and \eqref{eq:PLP}. Moreover, we find an explicit representative for the elements in the cohomology classes.
\subsection{The general procedure}
We roughly follow the same procedure described in \cite{MC15-1}, relying on an improved algorithm for our Mathematica software \cite{CV16} and validating the results with the \texttt{CDE} package of REDUCE algebra system \cite{V10, V14}. The improved algorhytm allows to impose the skewsymmetry in a faster way and to effectively deal with a lesser number of unknown functions.

We start with a third degree skewsymmetric $\lambda$ bracket, whose general form can be obtained by the following procedure. Let $M=M^{ij}_S \mlambda^S\in \cA[\lambda_1,\lambda_2]$ a $n\times n$ matrix whose entries are formal polynomials in $\lambda$'s, with differential polynomials as coefficients. Let us consider only homogeneous polynomials, for which we assign degree 1 to $\lambda$ and use the differential grading in $\cA$, and let $M^{*ij}=(-\mlambda-\mdev)^SM^{ij}_S$. Then the $\lambda$ bracket defined on the generators by $\{p^i{}_{\mlambda}p^j\}=M^{ij}-M^{*ji}$ is skewsymmetric. Notice that the definition of the bracket makes meaningful only the symmetric part of the matrices $M_S$ where $|S|=s_1+s_2+\cdots+s_d$ is odd and the skewsymmetric one when $|S|$ is even.

We then define the third degree skewsymmetric bracket as follows

\begin{equation}\label{eq:gencoc}
\left\{{p^i}_{\mlambda}{p^j}\right\}_{[2]}=M_{ij}-M^{*}_{ji}
\end{equation}
where
\begin{equation}
\begin{split}
M_{ij}&=A^{abc}_{ij}(p)\lambda_a\lambda_b\lambda_c+B^{ab,cl}_{ij}(p)\dev_cp^l\lambda_a\lambda_b+C^{a,bl,cm}_{ij}(p)\dev_bp^l\dev_cp^m\lambda_a\\
&\quad+D^{a,bcl}_{ij}(p)\dev_{bc}p^l\lambda^a+\frac12E^{abcl}_{ij}(p)\dev_{abc}p^l+\frac12F^{abl,cm}_{ij}(p)\dev_{ab}p^l\dev_cp^m\\
&\quad+\frac12G^{al,bm,cn}_{ij}(p)\dev_ap^l\dev_bp^m\dev_cp^n 
\end{split}
\end{equation}
where summation over all the repeated indices for $i=1\ldots2$ is assumed and $\dev_a=\dev_{x^a}$.
All the coefficients enjoy some symmetry properties given by the definition. In particular, $A$'s are totally symmetric in the exchange of the indices $(a,b,c)$, $B$'s are symmetric in the exchange of $(a,b)$, $C$'s for the simultaneous exchange of $(b,l\leftrightarrow c,m)$, $D$'s are symmetric in $(b,c)$, $E$'s in $(a,b,c)$, $F$'s in $(a,b)$ and, finally, $G$'s are completely symmetric for the simultaneous exchange of $(a,l\leftrightarrow b,m \leftrightarrow c,n)$. To deal only with the independent parameters, we define $A$, $C$, and $D$ symmetric in $(i,j)$ and $B$, $E$, $F$, and $G$ skewsymmetric.

The number of free coefficients is 172. We proceed to derive the so called \emph{cocycle condition}, namely the set of algebraic and differential equations that the 172 functions must satisfy in order to fulfill, respectively, $\ud_{P_1}([2])=0$, $\ud_{P_2}([2])=0$, and $\ud_{P_{LP}}([2])=0$. The preliminary output of the software consists of systems of, respectively, 1700, 1879, and 3273 algebraic-differential equations.  coboundary conditions. These three systems are linear in the unknown coefficients, hence they can be reduced, either by directly computing their Janet basis \cite{PR05} or first solving the purely algebraic equations and then computing the Janet basis of the linear system of the remaining unknowns. The solutions of these system are the elements of the kernel of the Poisson differentials, namely compatible deformations of order two. 

We now consider the form of the trivial deformations of the same order. The Miura transformation producing them has the general form
\begin{equation}
\begin{split}
 p^i\mapsto P^i&=p^i+\epsilon^2\left(f^{abl,i}\dev_{ab}p^l +g^{albm,i}\dev_ap^l\dev_bp^m\right)\\
 &=p^i+\epsilon^2 X^i.
 \end{split}
\end{equation}
and the corresponding deformations are
\begin{equation}\label{eq:trivdefo}
\{p^i{}_{\mlambda}p^j\}_{\bullet,[2]}=\{X^i{}_{\mlambda}p^j\}_\bullet+\{p^i{}_{\mlambda}X^j\}_\bullet-X\left(\{p^i{}_{\mlambda}p^j\}_\bullet\right)
\end{equation} 
with $\bullet$, respectively, equal to $P_1$, $P_2$, and $P_{LP}$. We expand \eqref{eq:trivdefo} identifying the coefficients of a trivial deformation. These expressions can be read as a system of inhomogeneous differential equations for the parameters of the Miura transformation, in terms of the coefficients of the deformation. We call the compatibility conditions of such system of linear PDEs the \emph{coboundary condition}. They represent the conditions that the coefficients of a deformation must satisfy to be obtained by a Miura transformation, namely for the deformation to be in the image of $\ud_P$.

Hence, the cohomology classes in $H^2_3$ are identified by the solutions of the cocycle condition that fail to be solutions of the coboundary one, while it is obvious that the solutions of coboundary conditions are also solutions of the cocycle one.
\subsection{$H^2_3(P_1)$}\label{sec:H23P1}
The Janet basis of the cocycle condition for a bracket compatible with $P_1$ consists of 193 equations, while the Janet basis of the coboundary condition is made of 189 equations.

Comparing the two sets of equations, we get the coboundary conditions not satisfied by a generic cocycle, in the first column of the following table. The parameters involved in these four equations, however, must satisfy the equations in the second and third column.
\begin{align*}
 \text{\textbf{Coboundary}} &&\text{\textbf{Cocycle}}&&\\
 D^{2,221}_{11}&=0&\frac{\dev D^{2,221}_{11}}{\dev p}&=0&\frac{\dev D^{2,221}_{11}}{\dev q}&=0\\
 D^{1,112}_{22}&=0&\frac{\dev D^{1,112}_{22}}{\dev p}&=0&\frac{\dev D^{1,112}_{22}}{\dev q}&=0\\
 A^{222}_{11}&=0&\frac{\dev A^{222}_{11}}{\dev p}&=-\frac{1}{3}D^{2,221}_{11}&\frac{\dev A^{222}_{11}}{\dev q}&=0\\
 A^{111}_{22}&=0&\frac{\dev A^{111}_{22}}{\dev p}&=0&\frac{\dev A^{111}_{22}}{\dev q}&=-\frac{1}{3}D^{1,112}_{22}.
\end{align*}
 We identify a representative for the cohomology class $H^2_3(P_1)$ as a solution of the cocycle conditions -- the eight we have written and, of course, all the remaining ones -- that is not a solution of the coboundary ones, such as the following:
\begin{align}
A^{111}_{22}&=c_2 q+ \frac{c_4}{2}&D^{1,112}_{22}&=-3c_2\\\notag
A^{222}_{11}&=c_1 p + \frac{c_3}{2}&D^{2,221}_{11}&=-3c_1.
 \end{align}
The remaining coefficients in the definition of $P_{1,[2]}$ can be set to 0 as this satisfies all the remaining 185 equations shared by the cocycle and coboundary conditions.
\begin{thm}
The cohomology group $H^2_3(P_1)$ has dimension 4. A representative of an element of the cohomology class is

\begin{subequations}
\begin{align}\label{eq:cocH23P1}
\{p{}_{\mlambda}p\}_{P_1,[2]}&=\left(2c_1p+c_3\right)\lambda_2^3+3c_1p_y\lambda_2^2-3c_1p_{yy}\lambda_2-2c_1p_{yyy}\\
\{p{}_{\mlambda}q\}_{P_1,[2]}&=0\\
\{q{}_{\mlambda}p\}_{P_1,[2]}&=0\\
\{q{}_{\mlambda}q\}_{P_1,[2]}&=\left(2c_2q+c_4\right)\lambda_1^3+3c_2q_x\lambda_1^2-3c_2q_{xx}\lambda_1-2c_2q_{xxx}.
\end{align}
\end{subequations}
\end{thm}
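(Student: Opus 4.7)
The plan is to instantiate the algorithmic framework of Section 3.1 in the case $P=P_1$ and extract the dimension of $H^2_3(P_1)$ from the mismatch between the cocycle and coboundary Janet bases, then exhibit an explicit representative.

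I would first write the most general skewsymmetric degree-$3$ $\lambda$-bracket $[2]$ on the two generators $(p,q)$ as in \eqref{eq:gencoc}, with its $172$ independent coefficient functions of $(p,q)$ subject to the stated symmetry rules on the blocks $A,B,\ldots,G$. Then I would impose the cocycle condition
\begin{equation*}
d_{P_1}([2])(p^i,p^j,p^k;\mlambda,\mmu)=0
\end{equation*}
using formula \eqref{eq:adPK}. Because $P_1$ is constant and diagonal, every term $\{\cdot{}_{\mlambda}\cdot\}_{P_1}$ acts via the master formula \eqref{eq:master} simply as the selection of a jet-derivative of its argument multiplied by $\lambda_1$ or $\lambda_2$; this simplification makes the full expansion feasible. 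Collecting coefficients of the independent monomials in $\mlambda,\mmu$ and the jet variables $p^i_L$ reproduces the $1700$ algebraic-differential equations quoted in the paper, which I would reduce to the Janet basis of $193$ equations describing $\ker d_{P_1}|_{\Omega^2_3}$ (either by direct Janet reduction or, as suggested in the text, by first solving the purely algebraic part).

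In parallel I would compute the coboundary space $\im d_{P_1}|_{\Omega^1_2}$: take the general degree-$2$ evolutionary vector field $X$ of the form $X^i = f^{abl,i}\dev_{ab}p^l + g^{albm,i}\dev_a p^l \dev_b p^m$, substitute into \eqref{eq:trivdefo} for $\bullet=P_1$, and read off the induced values of $A^{abc}_{ij}, B^{ab,cl}_{ij},\ldots,G^{al,bm,cn}_{ij}$ in terms of $f$ and $g$. A second Janet reduction then yields the announced $189$ equations. Comparing the two bases, I expect exactly the four extra rows of the paper's table: the cocycle conditions allow $A^{222}_{11}, A^{111}_{22}, D^{2,221}_{11}, D^{1,112}_{22}$ to be nonzero with the PDE constraints listed, while the coboundary conditions force them to vanish. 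The cocycle PDEs constrain these four functions to be affine in a single coordinate, producing a $4$-parameter space spanned by $c_1,c_2,c_3,c_4$. I would then check that setting all remaining coefficients to zero solves the $185$ equations common to the two systems (this is immediate because those equations are homogeneous in the coefficients being set to zero), and that the resulting bracket displayed in the theorem is indeed skewsymmetric of degree $3$.

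The principal obstacle is entirely computational: neither the cocycle nor the coboundary system is humanly tractable, so the argument rests on carefully implemented Janet-basis reductions of large linear PDE systems over $C^\infty(p,q)$, cross-validated between two independent computer algebra implementations (Mathematica via \cite{CV16} and REDUCE's \texttt{CDE} package \cite{V10, V14}). The conceptually delicate point is bookkeeping the independence of the $172$ ansatz coefficients under the symmetry/antisymmetry rules — ensuring that only the symmetric parts of the odd-$|S|$ blocks and the skewsymmetric parts of the even-$|S|$ blocks contribute, so that no spurious relations inflate or deflate the dimension count. Once the two Janet bases are trusted, the equality $\dim H^2_3(P_1)=4$ and the correctness of the representative \eqref{eq:cocH23P1} follow by direct inspection.
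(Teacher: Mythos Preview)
Your proposal is correct and follows essentially the same approach as the paper: you instantiate the general procedure of Section~3.1 for $P_1$, reduce the cocycle and coboundary systems to their Janet bases of $193$ and $189$ equations respectively, identify the four extra coboundary conditions and the corresponding cocycle constraints on $A^{222}_{11}$, $A^{111}_{22}$, $D^{2,221}_{11}$, $D^{1,112}_{22}$, and then exhibit the four-parameter representative by setting the remaining coefficients to zero. The only caveat is that your claim ``this is immediate because those equations are homogeneous in the coefficients being set to zero'' needs the additional observation that the $185$ shared equations do not couple the four surviving functions to the vanishing ones in an inhomogeneous way---this is indeed the case here, but it is a fact to be verified in the output rather than assumed a priori.
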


\subsection{$H^2_3(P_2)$}
The Janet basis of the cocycle condition for a bracket compatible with $P_2$ consists of 212 equations, while the Janet basis of the coboundary condition is made of 207 equations.

As in the previous case, most of the coboundary conditions are satisfied by all the solutions of the cocycle ones. However, there are 22 linear combinations of the parameters that vanish for the \textbf{Coboundary condition} but do not for generic cocycles. They are not independent and we can select 5 of them, of which the other ones are linear combinations. We denote them $(b_0,b_1,b_2,b_3,b_4)$ and they are as follows:
\begin{align*}
b_0&:=A^{122}_{11}-\frac{2}{3}A^{222}_{12}=0\\
b_1&:=\frac{13}{18}D^{1,221}_{11} -\frac29 D^{1,222}_{12} - \frac29 D^{2,112}_{11}-\frac79D^{2,121}_{11}+\frac49D^{2,122}_{12}-\frac13D^{2,221}_{12}\\
&\quad-\frac43E^{122,2}_{12}-\frac43E^{2,221}_{12}=0\\
 b_2&:=D^{1,221}_{11} + 2D^{2,121}_{11}  - 2 D^{2,221}_{12}=0\\
b_3&:=B^{22,11}_{12} +\frac12 D^{1,112}_{11} +  D^{1,121}_{11} - 2 D^{1,122}_{12} + \frac12 D^{1,222}_{22} - 
 \frac12 D^{2,111}_{11} -  D^{2,112}_{12}+\\
&\quad +  D^{2,122}_{22} -\frac12 D^{2,221}_{22} - 3 E^{1221}_{12}=0\\
b_4&:=A^{222}_{11}=0.
\end{align*}
On the other hand, their corresponding \textbf{Cocycle condition} is
\begin{align*}
\frac{\dev b_0}{\dev p}&=\frac{1}{3}\left(b_1-2 b_2\right)&\frac{\dev b_1}{\dev q}&=0\\
\frac{\dev b_1}{\dev p}&=0&\frac{\dev b_1}{\dev q}&=0\\
\frac{\dev b_2}{\dev p}&=0&\frac{\dev b_2}{\dev q}&=0\\
\frac{\dev b_3}{\dev p}&=0&\frac{\dev b_3}{\dev q}&=0\\
\frac{\dev b_4}{\dev p}&=0&\frac{\dev b_4}{\dev q}&=0.
\end{align*}
\begin{thm}\label{thmH2P2}
The cohomology group $H^2_3(P_2)$ is a vector space of dimension $5$. A representative of an element of the cohomology class is the $\lambda$ bracket defined as follows:
\begin{subequations}
\begin{align}\label{eq:cocH23P2}\notag
\{p{}_{\mlambda}p\}_{P_2,[2]}&=c_1\left(2\lambda_1\lambda_2^2p+2\lambda_1\lambda_2p_y+\lambda_2^2p_x+3\lambda_1p_{yy}+2\lambda_2p_{xy}+2p_{xyy}\right)+\\
&\quad+2c_4\lambda_2^3,\\\notag
\{p{}_{\mlambda}q\}_{P_2,[2]}&=c_1\left(2\lambda_2^2q_x+2\lambda_1\lambda_2q_y+\lambda_1q_{yy}+\lambda_2q_{yy}+2\lambda_2q_{xy}+2q_{xyy}\right)\\
&\quad+c_2\left(2\lambda_2^3p+4\lambda_2^2p_y\right) -3c_0\lambda_2^3,\\\notag
\{q{}_{\mlambda}p\}_{P_2,[2]}&=-c_1\left(2\lambda_2^2q_x+2\lambda_1\lambda_2q_y+\lambda_1q_{yy}-\lambda_2p_{yy}+2\lambda_2q_{xy}+2q_{xyy}\right)\\
&\quad+c_2\left(2\lambda_2^3p+2\lambda_2^2p_y-2\lambda_2p_{yy}-2p_{yyy}\right) -3c_0\lambda_2^3,\\\notag
\{q{}_{\mlambda}q\}_{P_2,[2]}&=-c_1\left(2\lambda_2q_{yy}-q_{yyy}\right)-2c_2\left(2\lambda_2q_{yy}+q_{yyy}\right)\\
&\quad+\frac{2c_3}{3}\left(2\lambda_2^3p+3\lambda_2^2p_y-3\lambda_2p_{yy}-2p_{yyy}\right).
\end{align}
\end{subequations}
\end{thm}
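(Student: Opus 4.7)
The plan is to follow the general procedure outlined in Section 3.1, specialised to the bracket $P_2$ given in \eqref{eq:P2}. Starting from the generic third-degree skewsymmetric $\lambda$ bracket \eqref{eq:gencoc} with its 172 free coefficient functions of $(p,q)$, I would compute $\ud_{P_2}([2])$ by directly applying the master formula \eqref{eq:master} inside the expression \eqref{eq:adPK}, where $P=P_2$ and $K=[2]$. Because $P_2$ is a constant-coefficient bracket of order one, the six terms in \eqref{eq:adPK} simplify considerably: three of them contribute only constant-coefficient $\lambda$-derivations of the cocycle ansatz, and the other three couple the derivatives $\dev_x,\dev_y$ of the coefficient functions to the $\lambda_\alpha$ factors in $P_2$. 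Extracting the coefficients of the independent monomials in $(\lambda_1,\lambda_2,\mu_1,\mu_2,\text{jet variables})$ yields the cocycle system that the software then reduces to a Janet basis of 212 equations.

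Next, I would set up the coboundary side. A second-order evolutionary vector field has the schematic form $X^i = f^{abl,i}\dev_{ab}p^l + g^{albm,i}\dev_ap^l\dev_bp^m$, and formula \eqref{eq:trivdefo} applied to $P_2$ gives the image $\ud_{P_2}(X)$ in terms of the 2-tensor $f$ and 3-tensor $g$. Matching against the 172-parameter ansatz produces a linear inhomogeneous system for $(f,g)$ whose compatibility conditions form the coboundary system; its Janet basis has 207 equations. The key finite-dimensional step is then to compare the two Janet bases: their difference defines a $\R$-linear map from the space of cocycle parameters (modulo the shared equations) into the space of coboundary obstructions, and the kernel is exactly $H^2_3(P_2)$.

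Concretely, I would identify among the coboundary equations those that are not implied by the cocycle ones; the paper's algorithm isolates 22 such linear combinations. Using Gaussian elimination over $\R$ (treating the coefficient functions as unknowns), I would extract a basis of 5 independent combinations $b_0,\dots,b_4$ and record their cocycle-imposed derivatives with respect to $p,q$, which are either zero or linear in the other $b_k$. Solving this small triangular system shows that $b_4$ and $b_1,b_2,b_3$ are constants while $b_0$ is a polynomial of degree one in $p$ whose leading coefficient is fixed by $b_1-2b_2$; hence the space of classes is exactly 5-dimensional. Finally, to produce the explicit representative displayed in the theorem, I would make the simplest ansatz for the coefficients $A,B,C,D,E,F,G$ that realises the five prescribed values of $b_0,\dots,b_4$ with zero contribution from all other parameters, then substitute into \eqref{eq:gencoc} and use PVA-skewsymmetry to assemble the four bracket entries.

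The main obstacle is purely combinatorial: reducing the 1879 raw cocycle equations and the analogous coboundary system to Janet bases of sizes 212 and 207 is only feasible with computer algebra, and the delicate part is verifying that the five combinations $b_0,\dots,b_4$ are truly independent modulo the common relations, rather than artefacts of the ordering of the Janet reduction. I would cross-check this using two independent implementations (the authors' own Mathematica routine \cite{CV16} and the \texttt{CDE} package of REDUCE \cite{V10,V14}) and independently verify the explicit representatives \eqref{eq:cocH23P2} by direct substitution into \eqref{eq:adPK}, checking that $\ud_{P_2}([2])=0$ and that no choice of $X$ in \eqref{eq:trivdefo} produces the same bracket.
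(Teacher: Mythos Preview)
Your proposal is correct and follows essentially the same approach as the paper: computing the Janet bases of the cocycle and coboundary systems (sizes 212 and 207), isolating the 22 coboundary combinations not implied by the cocycle conditions, reducing them to the five independent $b_0,\dots,b_4$, solving their simple first-order constraints to see that the quotient is $\R^5$, and then choosing an at-most-linear-in-$p$ ansatz for the $A$'s with constant remaining coefficients to produce the explicit representative. The only nuance worth noting is that the paper's proof is terse because the bulk of the argument (the Janet reductions and the list of $b_i$'s with their derivative constraints) is placed in the discussion preceding the theorem statement rather than inside the formal proof environment.
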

\begin{proof}
From the Cocylce conditions previously listed, we easily conclude that $H^2_3(P_2)\cong \R^5$. Indeed, a solution of the cocycle conditions that is not a solution of coboundary conditions depends on five constants, since we have
\begin{subequations}\label{eq:H23P2}
\begin{align}
b_0&=\frac{1}{3}\left(c_1-2c_2\right)p + c_0\\
b_1&=c_1&b_2&=c_2\\
b_3&=c_3&b_4&=c_4
 \end{align}
 \end{subequations}
with $(c_1,\ldots,c_5)$ constants. In addition to \eqref{eq:H23P2}, the coefficients of a generic cocycle must satisfy all the remaining conditions. Their form suggests that we can look for a representative in which the coefficients $A$ are at most linear in $p$ and all the other ones are constant. The cocycle conditions turn into a set of algebraic equations, for which we easily found one solution as follows
\begin{align*}
A^{122}_{11}&=\frac{1}{3}c_1p&A^{222}_{11}&=c_4\\
A^{222}_{12}&=c_2p-\frac32c_0&A^{222}_{22}&=\frac23c_3p\\
B^{12,22}_{12}&=\frac{c_1}{2}&B^{22,12}_{12}&=\frac{c_1}{2}\\
B^{22,21}_{12}&=\frac{c_2}{2}&&\\
D^{1,221}_{11}&=c_1&D^{2,221}_{12}&=\frac12c_1-2c_2\\
D^{2,221}_{22}&=-2c_3&D^{2,222}_{22}&=-c_1-2c_2\\
E^{122,2}_{12}&=\frac{c_1}{6}&E^{222,1}_{12}&=\frac12(c_2-c_1),
\end{align*}
and the remaining coefficients are set to vanish. This gives the explicit form for a generic compatible bracket.
\end{proof}
\subsection{$H^2_3(P_{LP})$}\label{sec:H23PLP}
The set of equations for the cocycle condition consists here of 3723 among algebraic equations and linear PDEs. The computers available to us cannot process such a huge number of equations to provide a Janet basis, so we first solve algebraically for 128 of the 172 coefficients of the deformed bracket, getting a system of 1706 linear PDEs for 34 variables, namely $A^{abc}_{ij}$, $B^{ab,cl}_{12}$, and $D^{2,111}_{22}$, $D^{1,222}_{22}$, $D^{1,111}_{22}$, $D^{2,222}_{12}$, $D^{2,111}_{12}$, $D^{1,221}_{12}$, $D^{1,111}_{12}$, $D^{2,222}_{11}$, $D^{2,111}_{11}$, and $D^{1,222}_{11}$. The Janet basis of this reduced system is constituted by 6 equations, and together with all the algebraic relations is the \textbf{Cocycle condition}.

We then proceed to compute the compatibility conditions of the nonhomogeneous system of PDEs expressing the aforementioned 34 coefficients in terms of the parameters of a generic Miura transformation. Its Janet basis consists of 4 equations that are provided in the Appendix (Equations \eqref{eq:JComp1}--\eqref{eq:JComp4}). Out of the 6 Cocycle conditions, two are equivalent to the Coboundary condition \eqref{eq:JComp1} and \eqref{eq:JComp2}. By taking successive derivatives of the remaining two coboundary conditions, we observe the following
 \begin{align*}
  \text{\textbf{Coboundary}}&&\text{\textbf{Cocycle}}&\\
  \text{LHS.\eqref{eq:JComp3}}&=0&\text{LHS.\eqref{eq:JComp3}}&=2c_1 p^2q^3+30c_2 p^3q^2\\
 \text{LHS.\eqref{eq:JComp4}}&=0&\text{LHS.\eqref{eq:JComp4}}&=-2c_1 p^2q^3-198c_2 p^3q^2.
 \end{align*}
 \begin{thm}
 The cohomology group $H^2_3(P_{LP})$ has dimension 2. A generic representative in $H^2_3(P_{LP})$  has the form $\{p^i{}_{\mlambda}p^j\}_{P_{LP},[2]}=c_1\{p^i{}_{\mlambda}p^j\}^{(1)}+c_2\{p^i{}_{\mlambda}p^j\}^{(2)}$. We write here the leading order terms of the elements in the basis of the cohomology class and leave the full expression to the appendix \ref{app:H23PLP}
\begin{subequations}\label{eq:cocH23_1}
\begin{align}
\{p{}_{\mlambda}p\}^{(1)}&= 2\lambda_1^3  +O(\lambda_1,\lambda_2)\\
\{p{}_{\mlambda}q\}^{(1)}&=2\lambda_1^2\lambda_2+ O(\lambda_1,\lambda_2)\\
\{q{}_{\mlambda}p\}^{(1)}&=2\lambda_1^2\lambda_2  O(\lambda_1,\lambda_2)\\
\{q{}_{\mlambda}q\}^{(1)}&=2\lambda_1\lambda_2^2+ O(\lambda_1,\lambda_2),
\end{align}
\end{subequations}
and
\begin{subequations}\label{eq:cocH23_2}
\begin{align}
\{p{}_{\mlambda}p\}^{(2)}&= O(\lambda_1,\lambda_2)\\
\{p{}_{\mlambda}q\}^{(2)}&=-\lambda_2^2\left(\frac{p_x}{p}-\frac{11p_y}{2q}\right) + O(\lambda_1,\lambda_2)\\
\{q{}_{\mlambda}p\}^{(2)}&=\lambda_2^2\left(\frac{p_x}{p}-\frac{11p_y}{2q}\right)+ O(\lambda_1,\lambda_2)\\
\{q{}_{\mlambda}q\}^{(2)}&=O(\lambda_1,\lambda_2).
\end{align}
\end{subequations}
 \end{thm}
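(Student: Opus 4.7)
The plan is to mirror the computational strategy used for $P_1$ and $P_2$, but adapted to the substantially larger system that $P_{LP}$ produces. I would start from the generic third-degree skewsymmetric $\lambda$ bracket $\{p^i{}_{\mlambda}p^j\}_{[2]}$ with its 172 free coefficient functions $A,B,\ldots,G$ of $(p,q)$, substitute into the defining formula \eqref{eq:adPK} for $\ud_{P_{LP}}([2])$, and use the master formula \eqref{eq:master} together with the explicit bracket \eqref{eq:PLP} to expand everything as a polynomial in $\mlambda,\mmu$ whose coefficients are differential polynomials in $p,q$ and their $x,y$-derivatives. Setting every such coefficient to zero produces the cocycle system; it will be massive (on the order of a few thousand equations) because $P_{LP}$ is nonconstant, so each bracket with a generator contributes derivative terms.

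Rather than attempt a Janet basis of the raw system, I would first isolate its purely algebraic equations, which express the majority of the 172 unknowns in terms of a much smaller remaining set. Based on the structure, one expects roughly $128$ coefficients to be eliminated this way, leaving a linear PDE system in the undetermined $A^{abc}_{ij}$, the $B^{ab,cl}_{12}$, and a handful of $D$-coefficients. Computing a Janet basis of this reduced PDE system (tractable with our Mathematica implementation \cite{CV16} and cross-checkable with \texttt{CDE} \cite{V10,V14}) yields the full \textbf{Cocycle condition}.

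In parallel I would construct the \textbf{Coboundary condition}: take the general degree-2 Miura vector field $X^i=f^{abl,i}\dev_{ab}p^l+g^{albm,i}\dev_ap^l\dev_bp^m$, compute $\{X^i{}_{\mlambda}p^j\}_{P_{LP}}+\{p^i{}_{\mlambda}X^j\}_{P_{LP}}-X(\{p^i{}_{\mlambda}p^j\}_{P_{LP}})$ via \eqref{eq:master}, and match coefficients to express the same 34 reduced unknowns in terms of $f,g$. The system of inhomogeneous PDEs for $f,g$ has compatibility conditions whose Janet basis provides the coboundary constraints.

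The decisive step is the comparison. I expect some of the coboundary equations to coincide with cocycle equations (yielding no new classes), while the remaining ones will fail to hold on generic cocycles; evaluating their left-hand sides on a generic cocycle-solution should produce nonzero expressions parametrised by a finite number of free constants, one per independent cohomology class. Counting these constants gives $\dim H^2_3(P_{LP})$, and solving back for the $A,B,D,\ldots$ coefficients produces the explicit basis representatives $\{p^i{}_{\mlambda}p^j\}^{(1)}$ and $\{p^i{}_{\mlambda}p^j\}^{(2)}$. The main obstacle I anticipate is neither conceptual nor algebraic but a matter of scale: the raw cocycle system is too large to feed directly to a Janet basis engine, so the correctness of the proof hinges on splitting algebraic from differential equations carefully, on verifying the Janet basis in two independent CASs, and on double-checking that the remaining coboundary equations are genuinely not generated by cocycle equations (not just ``not in the Janet basis''), which is what justifies the final dimension count of $2$.
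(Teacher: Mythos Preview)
Your proposal is correct and follows essentially the same approach as the paper: reduce the $\sim\!3700$-equation cocycle system by first solving the algebraic relations (eliminating $128$ of the $172$ unknowns, leaving the $34$ variables $A^{abc}_{ij}$, $B^{ab,cl}_{12}$ and a handful of $D$'s), compute the Janet basis of the residual PDE system, derive the coboundary compatibility conditions from the inhomogeneous Miura system, and compare the two to extract the two free constants. The only refinement the paper adds is that the comparison step requires taking successive derivatives of the surviving coboundary equations before their left-hand sides visibly reduce, on a generic cocycle, to the monomial expressions $2c_1p^2q^3+30c_2p^3q^2$ and $-2c_1p^2q^3-198c_2p^3q^2$ that pin down $\dim H^2_3(P_{LP})=2$.
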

 \begin{proof}
The solution of the cocycle conditions that are not solution of the coboundary ones depends on two arbitrary constants, that gives us the dimension of the cohomology group. As an illustration of the form of the cocycles, we choose two solutions for the system of 34 coefficients. The ``normalisation'' factors in front of the constants $(c_1,c_2)$ in the previous set of equations are chosen to get a simpler form for the representatives in the cohomology class. We have, respectively,
\begin{align*}
 \begin{cases}
  A^{111}_{11}&=1\\A^{112}_{12}&=\frac13\\A^{122}_{22}&=\frac13\\\text{other 31 coeff.}&=0
 \end{cases}
&&\text{and}&& \begin{cases}
  B^{22,21}_{12}&=\frac{11}{2q}\\B^{22,11}_{12}&=-\frac1p\\\text{other 32 coeff.}&=0
 \end{cases}
\end{align*}
To get the full form of the two cocycles we then need to use the identities for the remaining 128 coefficients of the skewsymmetric bracket. 
\end{proof}
\subsection{Further remarks on $H^p_d(P_1)$}
It is worthy noticing an interesting pattern in the dimension of the cohomology groups $H^p_d(P_1)$, as explicitly computed in several examples in \cite{MC15-1}, the Paragraph \ref{sec:H23P1} of the current paper, and \cite{MC15-2}.

We can compare it with the dimension of the cohomology groups for the scalar two-dimensional bracket $\{u{}_{\mlambda}u\}_{(sc)}=\lambda_y$ computed in \cite{CCS15} for general values of $(p,d)$. We get
\begin{center}
\begin{tabular}{|r | *{7}{c}|}
\hline
$(p,d)$ & (1,0) & (1,1) &(1,2) & (2,1) & (2,2) & (2,3) & (3,2)\\
\hline
$\dim H^p_d(P_{(sc)})$ & 1 & 1 & 0 & 1 & 0 & 2 & 0\\
\hline
$\dim H^p_d(P_1)$ & 2 & 2 & 0 & 2 & 0 & 4 & 0\\
\hline
\end{tabular}
\end{center}
from which we immediately observe that the dimension of the cohomology groups for $P_1$ appears, in all the cases we have computed, to be twice the dimension of the corresponding cohomology for the scalar bracket.

The reason of this behaviour is the diagonal form of the bracket $P_1$. Indeed, $P_1\cong P_{(sc)}(\cF_p)\oplus P_{(sc)}(\cF_q)$, where by $\cF_p$ and $\cF_q$ we denote, respectively, the space of local functionals whose densities depend only on $p(x,y)$ and $q(x,y)$. This means that the theorem proved in \cite{CCS15} for the dimension of the cohomology groups of a scalar multidimensional bracket, specialised to the case where $d=2$, can be used to determine the dimension of the full (infinite, as expected) cohomology of $P_1$.

The dimension can be conveniently written using a generating function \cite{CCS15}
\begin{equation}\label{eq:dimcoh}
\sum_{d\geq0}\dim H^p_d(P_1)x^d=2\left(h^p(x)+h^{p+1}(x)\right)
\end{equation}
where
\begin{equation}
h^p(x) := x^{\frac{p}2 (p-1)} \prod_{i=2}^p (1-x^i)^{-1}  +x \delta_{p,0}.
\end{equation}
We provide a few explicit values for the dimension of $H^p_d(\cF)$ obtained by \eqref{eq:dimcoh} in the following table.
\begin{center}
\begin{tabular}{| c | *{10}{c} |}
\hline
 $d\rightarrow$ & 1 & 2 & 3 & 4 & 5 &6&7&8&9&10\\
\hline
$p=1$ & 2& 0 & 2 & 0 & 2 &0 &2 &0 & 2 & 0\\
\hline
$p=2$ & 2& 0 & 4 & 0 &4 &2 &4&2&6&2\\
\hline
$p=3$ & 0 & 0 & 2 &0 &2 &4&2 & 4&6&6\\
\hline
\end{tabular}
\end{center}
\section{Finite deformations and obstructions}
By definition of PVA cohomology, the elements in $H^2_3(P)$ are classes, under Miura transformations, of $\lambda$-brackets of degree 3 compatible with $\{\cdot{}_{\mlambda}\cdot\}_P$ that are not themselves Miura transformed of $P$. In our previous paper \cite{MC15-1}, it was proved that $H^2_2(P)=0$ for all the three classes of two-components, two-dimensional $\lambda$ brackets. This means that, according to the definition given in Section 2, a \emph{second order deformation} of, respectively, $P_1$, $P_2$, and $P_{LP}$ is given, up to Miura transformation, by the bracket itselft plus one element of the corresponding $H^2_3(P)$.

In general, such a deformation may be finite, if the resulting bracket satisfies the PVA-Jacobi identity. A deformation which is not finite can sometimes be \emph{extended} to an infinitesimal deformation of higher order. Let $P_{0}$ denote, symbolically, the undeformed $\lambda$ bracket of a Poisson vertex algebra. For the sake of compactness, in this section we will adopt a ``Schouten bracket'' notation for the PVA-Jacobi identity, namely
\begin{multline}\label{eq:defsch}
[A,B]^{ijk}_{\mlambda\mmu}:=\left\{u^i_{\mlambda}\left\{u^j{}_{\mmu}u^k\right\}_{B}\right\}_A+\left\{u^i_{\mlambda}\left\{u^j{}_{\mmu}u^k\right\}_{A}\right\}_B-\left\{u^j_{\mmu}\left\{u^i{}_{\mlambda}u^k\right\}_{B}\right\}_A\\-\left\{u^j_{\mmu}\left\{u^i{}_{\mlambda}u^k\right\}_{A}\right\}_B
-\left\{\left\{u^i{}_{\mlambda}u^j\right\}_A{}_{\mlambda+\mmu}u^k\right\}_B-\left\{\left\{u^i{}_{\mlambda}u^j\right\}_B{}_{\mlambda+\mmu}u^k\right\}_A,
\end{multline}
in such a way that we can denote the PVA-Jacobi identity for $P_{0}$ as $[P_{0},P_{0}]=0$, where we dropped all the indices because the identity holds for all $(i,j,k)$ and choice of the formal parameters $(\mlambda,\mmu)$. This notation is not only convenient, but motivated by the identification between $\lambda$ brackets of a PVA and local Poisson bivectors, obtained by the isomorphism of both with the Lie algebra structure on the space of local differentials \cite{dSK13-2}.

Let $P_{[1]}$ be an element in the first non-vanishing $H^2(P_{0})$ cohomology group, say $P_{[1]}\in H^2_k$. It defines a ($k$-th order) infinitesimal deformation of $P_{(0)}$, since
\begin{equation*}
\begin{split}
[P_{0}+\epsilon^kP_{[1]},P_{0}+\epsilon^kP_{[1]}]&=[P_{0},P_{0}]+2\epsilon^k[P_{0},P_{[1]}]+\epsilon^{2k}[P_{[1]},P_{[1]}]\\
&=\epsilon^{2k}[P_{[1]},P_{[1]}],
\end{split}
\end{equation*}
where the first term vanishes because $P_{0}$ is a PVA bracket and the second because it coincides with $\ud_{P_{0}}P_{[1]}=0$.

If $P_{[1]}$is not the bracket of a PVA, however, such a deformation is not finite. We call such a deformation \emph{extendible} if there exists a $\lambda$ bracket $P_{[2]}$, of order $2k+\deg P_{0}$, such that the equation
\begin{equation}\label{eq:extend}
2[P_{0},P_{[2]}]+[P_{[1]},P_{[1]}]=0
\end{equation}
has solution. If such is the case, indeed, the bracket $P_{0}+\epsilon^kP_{[1]}+\epsilon^{2k}P_{[2]}$ is a deformation of order $2k$. In the case we have just illustrated, $[P_{[1]},P_{[1]}]$ is a 3-cocycle, and the Equation always admits a solution only if $H^3_{2k}=0$. A nonvanishing third cohomology group in the appropriate degree is, hence, an \emph{obstruction} to the extension. This means that in the 1-dimensional case, where Getlzer's result holds true, all the infinitesimal deformations are extendible.

In this Section, rather than attempting the computation of the third cohomology group at the appropriated degree for the three PVA structures $P_1$, $P_2$, and $P_{LP}$, we will directly look for solutions of the equation \eqref{eq:extend}, with $P_{0}$ one of the three brackets of hydrodynamic type and $P_{[1]}$ elements of the corresponding $H^2_3$ group.
\begin{rmk}
Equation \eqref{eq:extend} is not linear in $P_{[1]}$, so it does not allow to separately consider the elements of a basis in $H^2_3(P_{0})$, whose dimension let us denote $n$. Chosen a basis $\{P_{[1]}^{(a)}\}_{a=1}^n$ for the cohomology group, we can write $P_{[1]}=\sum_{a=1}^n c_a P_{[1]}^{(a)}$ and Equation \eqref{eq:extend} as
\begin{equation}\label{eq:extendsplit}
2[P_{0},P_{[2]}]+\sum_{a,b=1}^nc_ac_b[P_{[1]}^{(a)},P_{[1]}^{(b)}]=:C(P_{0},P_{[1]},P_{[2]})=0.
\end{equation}
It would be of little interest to consider the $n(n+1)/2$ possible matching and looking for the solvability of \eqref{eq:extend} in each case. In this Section we will, then, only consider the extendability of generic elements of $H^2_3$ (namely, $c_a\neq 0\,\forall a$). Moreover, we will discuss whether the elements of the cohomology basis are \emph{bona fide} brackets of a Poisson vertex algebra, namely if they satisfy the PVA-Jacobi identity themselves.
\end{rmk}
\begin{thm}
The second order infinitesimal deformations of, respectively, $P_1$, $P_2$ and $P_{LP}$ given by generic elements of the corresponding group $H^2_3$ are \emph{not extendible}.
\end{thm}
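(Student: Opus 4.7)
The plan is to adopt a direct computational approach, closely parallel to the cocycle/coboundary calculation of Section~\ref{sec:coho} but one level up in the complex. For each undeformed bracket $P_0\in\{P_1,P_2,P_{LP}\}$, I would introduce a generic ansatz for $P_{[2]}$ as a skew-symmetric $\lambda$-bracket of differential degree $2k+\deg P_0=5$, constructed exactly as in \eqref{eq:gencoc} but with the considerably larger list of undetermined differential-polynomial coefficients required at degree~$5$. Simultaneously I would expand $P_{[1]}=\sum_{a=1}^{n}c_a P_{[1]}^{(a)}$ on the explicit basis of $H^2_3(P_0)$ from Section~\ref{sec:coho}, so that $n=4,5,2$ respectively for the three brackets.

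The core step is to expand the extension equation
\begin{equation*}
2[P_0,P_{[2]}]+[P_{[1]},P_{[1]}]=0
\end{equation*}
in the Schouten notation \eqref{eq:defsch} via repeated application of the master formula \eqref{eq:master}, and to collect the coefficients of every monomial in the formal variables $(\mlambda,\mmu)$ and in the jet variables. The result is a large linear inhomogeneous system whose unknowns are the free coefficients of $P_{[2]}$ and whose right-hand side, read off from $[P_{[1]},P_{[1]}]$, is a homogeneous polynomial of degree~$2$ in the $c_a$. Reducing this system to a Janet basis, exactly as was done for the cocycle and coboundary equations, one extracts compatibility conditions in the form of polynomial expressions in the $c_a$ alone that must vanish identically for a solution $P_{[2]}$ to exist. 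A convenient bookkeeping device is to package the whole obstruction through $C(P_0,P_{[1]},P_{[2]})$ of \eqref{eq:extendsplit}: after the Janet reduction, its surviving components are precisely the desired polynomial obstructions in the $c_a$. To conclude non-extendibility it suffices to exhibit, in each of the three cases, at least one such compatibility condition that is not the zero polynomial in $(c_1,\ldots,c_n)$ and whose nonzero monomials are not concentrated on a single variable, so that the obstruction cannot be killed by turning off a single cohomology direction; for $P_{LP}$, where $\dim H^2_3=2$, I expect the obstruction to surface already as a low-degree polynomial in $(c_1,c_2)$, in analogy with the scalar multidimensional case treated in \cite{CCS17}.

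The principal obstacle is the sheer size of the computation: a degree-$5$ $\lambda$-bracket carries on the order of several hundred independent coefficients, the Schouten squares $[P_{[1]},P_{[1]}]$ produce a very large number of jet monomials, and the Janet-basis reduction is substantially heavier than those performed in Section~\ref{sec:coho}, forcing reliance on the Mathematica pipeline of \cite{CV16} and the \texttt{CDE} backend of \cite{V14}. A subtler conceptual point is the need to certify that the detected obstruction is genuinely cohomological rather than an artefact of an incomplete ansatz for $P_{[2]}$---ultimately the question of whether what is being seen represents a nonzero class in $H^3_5(P_0)$. For the theorem as stated, however, one does not need to compute $H^3_5$: it is enough to verify that the inhomogeneous linear system in the coefficients of $P_{[2]}$ admits no solution for generic $(c_a)$, which is a finite check that the existing software can perform separately for each of the three brackets.
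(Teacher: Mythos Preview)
Your proposal is essentially the approach the paper takes: write a generic degree-$5$ skewsymmetric $\lambda$-bracket $P_{[4]}$ (the paper reports $1774$ free parameters), expand $C(P_0,P_{[1]},P_{[4]})$ via the master formula, and extract polynomial compatibility conditions in the $c_a$ that have no solution for generic values. Two small remarks. First, the obstruction class lives in $H^3_6(P_0)$, not $H^3_5$: $[P_{[1]},P_{[1]}]$ has differential degree $3+3=6$, matching $[P_0,P_{[4]}]$ of degree $1+5=6$; this is exactly the content of the paper's Corollary following the theorem. Second, the paper shortcuts the Janet reduction for $P_1$ and $P_2$ by locating specific monomials (e.g.\ the coefficient of $\lambda_2^5\mu_2$ in $C^{111}$ for $P_1$, or of $\lambda_2^5\mu_2$ in $C^{112}$ for $P_2$) to which $[P_0,P_{[4]}]$ cannot contribute at all because of the sparse form of $P_0$, so the obstruction is read off directly as $c_1c_3$, $c_2c_4$, $c_1c_4$, $c_2c_4$ terms; only for $P_{LP}$ does it resort to the full Janet machinery you describe.
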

\begin{proof}
The theorem follows from explicit computations. In particular, the ones for $H^2_3(P_{LP})$ have been performed using REDUCE Algebra system with the packages \textsl{CDE} \cite{V14} and \textsl{CDiff}\cite{V10}, that rely on the so-called $\theta$ formalism \cite{CCS15}. The resulting set of equation has then been analysed using the Maple package Janet \cite{PR05}. We used Mathematica computer algebra system with the package \textsl{MasterPVAmulti} \cite{CV16}, developed to perform computations with the $\lambda$ bracket formalism, for the other two cases.

We want to solve equation \eqref{eq:extendsplit} for $P_{0}$ respectively $P_1$, $P_2$, and $P_{LP}$ (or $P_{\bullet}$ for short) and where the second term is given by (twice) the PVA-Jacobi identity for the corresponding representative of the cohomology classes computed in Section \ref{sec:coho}. We write a candidate $P_{[2]}$, that we prefer in this context to denote by $P_{[4]}$ since it would correspond to the 4th order term of the deformation of the bracket of hydrodynamic type, being a homogeneous degree 5 skewsymmetric $\lambda$ bracket. We write the candidate $P_{[4]}$, which depends on 1774 parameters, similarly to what we did in Section 3.1, by defining it as
$$
\{p^i{}_{\mlambda}p^j\}_{[4]}=N_{ij}-N^*_{ij},
$$
with $N_{ij}$ a homogeneous differential polynomial of fifth degree in $\lambda$ and in the jet variables. The second term in Equation \eqref{eq:extend} is $[P_{[1]},P_{[1]}]$. Again, we stick to the naming convention adopted in Section \ref{sec:coho} and denote it by $P_{\bullet,[2]}$ to keep track of the corresponding hydrodynamic structure and of the deformation order. Notice that the absence of $c_a^2$ terms in $[P_{\bullet,[2]},P_{\bullet,[2]}]$ means that the corresponding bracket $P_{\bullet,[2]}^{(a)}$ is the bracket of a PVA, while the lack of $c_ac_b$ terms means that the corresponding elements of the basis are compatible. We then split the full expression collecting powers of $\lambda$'s, $\mu$'s, and of the partial derivatives up to the 6th order, and try to equate to 0 all the terms, solving for some of the many free parameters of $P_{[4]}$. 
\begin{description}
\item{Case $P_1$}
In the expansion of $[P_{1,[2]},P_{1,[2]}]$ we only have the terms $c_1^2$, $c_2^2$, $c_1c_3$, $c_2c_4$. This is enough to conclude that $P_{1,[2]}^{(3)}$ and $P_{1,[2]}^{(4)}$ are compatible brackets of a Poisson vertex algebra. Moreover, in the coefficients of $\lambda_1^5\mu_1$ of $C(P_1,P_{1,[2]},P_{[4]})^{222}$ and $\lambda_2^5\mu_2$ of $C(P_1,P_{1,[2]},P_{[4]})^{111}$ (the upper indices correspond to the component as from Equation \eqref{eq:defsch}) we observe the lack of terms coming from $[P_1,P_{[4]}]$, while the whole coefficients are not vanishing because of terms coming from, respectively, $\left[P_{1,[2]}^{(2)},P_{1,[2]}^{(2)}\right]$, $\left[P_{1,[2]}^{(2)},P_{1,[2]}^{(4)}\right]$ and $\left[P_{1,[2]}^{(1)},P_{1,[2]}^{(1)}\right]$, $\left[P_{1,[2]}^{(1)},P_{1,[2]}^{(3)}\right]$ that cannot vanish except when $c_1$ and $c_2$ are 0. 
\item{Case $P_2$}
The expansion of $[P_{2,[2]},P_{2,[2]}]$ contains terms $c_ac_b$ for $a=1,2$ and $b=0,\ldots,4$, as well as the terms $c_3c_4$ and $c_0c_3$. In the full expression $C(P_2,P_{2,[2]},P_{[4]})^{112}$ we observe that the coefficient of $\lambda_2^5\mu_2$ is $2c_1c_4$, and the one of $\lambda_2^4\mu_2^2$ is $8c_2c_4$, that cannot vanish since there are not corresponding term coming from $[P_2,P_{[4]}]$. Therefore the deformation $P_{2,[2]}$ is not extendible.
\item{Case $P_{LP}$}
The expansion of $[P_{LP,[2]},P_{LP,[2]}]$ contains all the terms $c_ic_j$ for $i,j=1,2$. Computing the full expression $C(P_{LP},P_{LP,[2]},P_{[4]})$ with the \textsl{CDE} package of Reduce computer algebra system, we obtain a huge overdetermined set of linear PDEs for the unknown parameters of $P_{[4]}$. None of this is impossible at sight as in the previous cases. However, we can select a subset of linear PDEs, corresponding to the coefficients of $\lambda_1^m\lambda_2^n\mu_1^s\mu_2^{6-m-n-s}$ (in fact, coefficients of $\theta_i\theta_j^m\theta_k^{6-m}$, since \textsl{CDE} uses the $\theta$ formalism). This subset of equation is significantly simpler, being constituted by 248 equations, a lot of which purely algebraic, for 206 equations; such a system, as its parent one, is nonhomogenous because of the presence of the terms coming from $[P_{LP,[2]},P_{LP,[2]}]$. The package Janet provides tools for computing the compatibility condition of systems of linear PDEs -- the only possible solution of which being $c_1=c_2=0$.
\end{description}
\end{proof}
\begin{cor}\label{cor5}
The elements of the cohomology group $H^2_3(P_1)$ in the class identified by $(c_1,c_2,c_3,c_4)=(0,0,c_3,c_4)$ are $\lambda$ brackets of Poisson vertex algebras. In particular, this means that the third and fourth elements of the basis of $H^2_3(P_1)$ form a biHamiltonian pair.

The elements of the cohomology group $H^2_3(P_2)$ given by $(c_0,c_1,c_2,c_3,c_4)$ equal, respectively, to $(0,0,0,c_3,0)$ and $(c_0,0,0,0,c_4)$ are $\lambda$ brackets of Poisson vertex algebras. In particular, $P_{2,[2]}^{(0)}$ and $P_{2,[2]}^{(4)}$ form a biHamiltonian pair.
\end{cor}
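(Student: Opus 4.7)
The plan is to extract the corollary as a direct reading of the expansion of $[P_{\bullet,[2]},P_{\bullet,[2]}]$ already obtained inside the proof of the preceding theorem, exploiting the bilinearity and symmetry of the operation defined in \eqref{eq:defsch}. Writing a generic representative of the cohomology on the chosen basis as $P_{\bullet,[2]}=\sum_a c_a P_{\bullet,[2]}^{(a)}$, one has
\begin{equation*}
[P_{\bullet,[2]},P_{\bullet,[2]}] \;=\; \sum_{a} c_a^{\,2}\,[P_{\bullet,[2]}^{(a)},P_{\bullet,[2]}^{(a)}] \;+\; 2\sum_{a<b} c_a c_b\,[P_{\bullet,[2]}^{(a)},P_{\bullet,[2]}^{(b)}].
\end{equation*}
Since the basis cocycles are linearly independent and the $c_a$ are independent formal parameters, the absence of the monomial $c_a^{\,2}$ from this expansion is equivalent to $P_{\bullet,[2]}^{(a)}$ satisfying the PVA--Jacobi identity on its own, and the absence of a mixed monomial $c_a c_b$ is equivalent to $P_{\bullet,[2]}^{(a)}$ and $P_{\bullet,[2]}^{(b)}$ being compatible as $\lambda$-brackets.

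Having set up this dictionary, the proof reduces to reading off, from the case-by-case analysis carried out in the proof of the theorem, which monomials are missing. For $P_1$ only $c_1^{\,2},c_2^{\,2},c_1 c_3,c_2 c_4$ appear, so $c_3^{\,2}, c_4^{\,2}$ and $c_3 c_4$ are all absent; hence $P_{1,[2]}^{(3)}$ and $P_{1,[2]}^{(4)}$ are each the $\lambda$-bracket of a PVA and are mutually compatible, so every cocycle with $c_1=c_2=0$ is a PVA and the pair $(P_{1,[2]}^{(3)},P_{1,[2]}^{(4)})$ is bi-Hamiltonian. For $P_2$ the surviving monomials are those involving at least one of $c_1,c_2$ together with the extra terms $c_0 c_3$ and $c_3 c_4$; in particular $c_0^{\,2}, c_3^{\,2}, c_4^{\,2}$ and $c_0 c_4$ do not occur, which simultaneously gives the PVA property for $P_{2,[2]}^{(3)}$ alone, the PVA property for every combination $c_0 P_{2,[2]}^{(0)}+c_4 P_{2,[2]}^{(4)}$, and the compatibility of $P_{2,[2]}^{(0)}$ with $P_{2,[2]}^{(4)}$.

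No genuine obstacle is expected, since the computation of $[P_{\bullet,[2]},P_{\bullet,[2]}]$ is exactly the one already performed to set up the obstruction equation $C(P_0,P_{[1]},P_{[2]})=0$ in the theorem, and what is left is pure bookkeeping in the polynomial ring $\R[c_0,\ldots,c_4]$. The only substantive point that deserves to be spelled out is that the monomials listed in the theorem's proof have nonvanishing coefficients, i.e.\ that no hidden cancellation reopens the possibility of spurious compatibility or PVA conditions; this is directly verifiable from the explicit expressions of the cocycles \eqref{eq:cocH23P1} and \eqref{eq:cocH23P2} given in Section \ref{sec:coho}, and can be checked with the same computer-algebra routines already used to obtain them.
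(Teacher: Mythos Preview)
Your proposal is correct and follows exactly the approach implicit in the paper: the corollary is stated without a separate proof because it is a direct reading of which monomials $c_ac_b$ are absent from the expansion of $[P_{\bullet,[2]},P_{\bullet,[2]}]$ computed in the proof of the preceding theorem, together with the bilinearity and symmetry of the bracket \eqref{eq:defsch}. Your bookkeeping of the missing monomials for both $P_1$ and $P_2$ matches the paper's analysis precisely.
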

\begin{cor}
The degree 6 components of the third cohomology groups $H^3_6(P_1)$, $H^3_6(P_2)$, $H^3_6(P_{LP})$ for  the two-dimensional two-components Poisson brackets of hydrodynamic type are not trivial.
\end{cor}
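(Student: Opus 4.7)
The plan is to derive this corollary directly from the preceding theorem by reinterpreting the failure of extendibility as the nonvanishing of a cohomology class. The first step is to recall that the differential $\ud_{P_0}$ comes from the graded Lie (Schouten-like) bracket $[\cdot,\cdot]$ on the variational complex defined in \eqref{eq:defsch}, for which the graded Jacobi identity implies, for any 2-cochain $Q$, the identity $\ud_{P_0}[Q,Q] = 2[\ud_{P_0} Q, Q]$. Applying this to a representative $P_{[1]}$ of a class in $H^2_3(P_0)$, where $P_0$ is any of $P_1$, $P_2$, $P_{LP}$, one concludes that $[P_{[1]},P_{[1]}]$ is $\ud_{P_0}$-closed, hence a 3-cocycle; and a quick count of degrees shows it lies in degree $2\cdot 3 = 6$.

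The second step identifies cohomological triviality of this 3-cocycle with the extendibility of the second-order deformation $P_0 + \epsilon^2 P_{[1]}$. Indeed, by equation \eqref{eq:extend}, the extension admits a solution $P_{[2]}$ precisely when $[P_{[1]},P_{[1]}] = -2[P_0,P_{[2]}] = -2\ud_{P_0} P_{[2]}$, that is, exactly when the class of $[P_{[1]},P_{[1]}]$ in $H^3_6(P_0)$ vanishes. A degree count forces the candidate $P_{[2]}$ to have differential degree $5$, matching the form of the homogeneous degree-5 ansatz $P_{[4]}$ already used in the proof of the previous theorem.

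At this point the corollary follows by invoking the theorem itself: for each of the three brackets $P_1$, $P_2$, $P_{LP}$, a generic representative $P_{[1]}$ of $H^2_3(P_0)$ is not extendible, so the class of $[P_{[1]},P_{[1]}]$ in $H^3_6(P_0)$ is nonzero, whence $H^3_6(P_0)\neq 0$. I do not expect any real obstacle here, since the theorem already carried out the heavy computational work; the only remaining bookkeeping is to observe that the surviving non-cancellable coefficients exhibited in the proof of the theorem (for instance those of $\lambda_1^5\mu_1$ in the $(2,2,2)$ component for $P_1$, or of $\lambda_2^4\mu_2^2$ in the $(1,1,2)$ component for $P_2$, and the Janet-basis obstructions for $P_{LP}$) furnish concrete representatives of nontrivial classes in the corresponding $H^3_6(P_0)$.
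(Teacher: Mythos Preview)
Your argument is correct and is precisely the reasoning the paper leaves implicit: the corollary is stated without proof because it follows immediately from the non-extendibility theorem via the standard identification of the obstruction $[P_{[1]},P_{[1]}]$ with a class in $H^3_6(P_0)$, exactly as you spell out. The paper's own discussion preceding the theorem already notes that $[P_{[1]},P_{[1]}]$ is a 3-cocycle and that \eqref{eq:extend} is solvable only if this cocycle is exact, so your proof simply makes this chain of implications explicit.
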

\begin{eg}[A bi-Hamiltonian pair for a simple PDE]
Let us consider the Poisson pencil $ P_{2,[2]}^{(3)}+\lambda P_2$. The compatibility between $P_{2,[2]}^{(3)}$ and $P_2$ is guaranteed by Theorem \ref{thmH2P2} and $P_{2,[2]}^{(3)}$ is a Poisson bracket itself by Corollary \ref{cor5}. Explicitly, it has the form
\begin{equation}\label{eq:eg-pencil}
P_{2,[2]}^{(3)}+\lambda P_2=\begin{pmatrix}0 &0 \\ 0 & 2p\dev_y^3+3p_y\dev_y^2-3p_{yy}\dev_y-2p_{yyy}\end{pmatrix}+\lambda \begin{pmatrix}0 & \dev_x\\\dev_x&\dev_y\end{pmatrix}.
\end{equation}
It follows from $\ker \dev_x=\ker\dev_y=\R$ that the Casimirs for $P_2$ are the Hamiltonian functionals $\int p$ and $\int q$. We pick $H_0=\int q$ because it is the only one which is not a Casimir of $P_{2,[2]}^{(3)}$, too. $P_{2,[2]}^{(3)}\delta H_0$ yields the Hamiltonian system
\begin{equation}
\begin{cases}\label{eq:eg-flow}
p_t=0,&\\
q_t=-2\, p_{yyy}.
\end{cases}
\end{equation}
Such a system is biHamiltonian. Indeed, it can be written as $P_2\delta H_1$ for the nonlocal Hamiltonian functional
\begin{equation}
H_1=\int p\left(\dev_x^{-1}p_{yyy}\right).
\end{equation}
However, $H_1$ is a Casimir of $P_{2,[2]}^{(3)}$, so we have a so-called short Lenard chain for the system \eqref{eq:eg-flow}.
\end{eg}
\section{Concluding remarks}
In this paper we have discussed the second order deformations of two-dimensional, two-component brackets of hydrodynamic type, with a mainly computational approach. This has been enough to conclude that, already for this case, the compatible deformation of the Poisson brackets are not Miura equivalent to the hydrodynamic type ones, as opposite as what is the case for one-dimensional, any-component case. On the contrary, there is a wide class -- arguably wider as the order of deformations increases, as it happens in the one-component case \cite{CCS15} -- of not equivalent and not trivial deformations. It is interesting, however, that none of the deformations we have found is actually extendible to higher order. The extension is either obstructed or not necessary, because it is already Poisson. It is worthy noticing, however, that we have found only one deformation ($P_{2,[2]}^{(3)}$) which is Poisson without being constant, that is the case that trivially fulfils Jacobi identity.

The technique we have used for these results can be extended to higher order deformations, where we are practically limited by the computational time and by the memory of the machine. However, to obtain a general result it seems necessary to overcome the order-by-order approach. In a paper together with G.~Carlet and S.~Shadrin we found the expression for the dimension of any cohomology group in the scalar case \cite{CCS15}. This result can be used to compute, for any order of deformation, the dimension of the cohomology of $P_1$, as we pointed out in Section  3.5. In a recent work with the same co-authors we addressed the problem of the extension of the compatible infinitesimal deformations, obtaining similar negative results about the extendibility of many of the compatible brackets and a full classification of the compatible scalar Poisson brackets in two space dimensions \cite{CCS17}. Adopting a similar approach for the remaining two-component cases (namely $P_2$ and $P_{LP}$) is the next step in this research program.

\paragraph*{Acknowledgements}
The author was supported by the was supported by the INdAM-COFUND-2012 Marie Curie
fellowship ``MPoisCoho -- Poisson cohomology of multidimensional Hamiltonian operators"

He is grateful to the organizers of PMNP2017 for the opportunity to present his work and for the wonderful scientific environment they have established in Gallipoli. Most of the results presented in Section 3 were obtained in the final part of the author's PhD program and are part of his dissertation \cite{MC15-2}: this work couldn't have been done without the guidance and supervision of B.~Dubrovin, to whom I am most grateful. I wish also to thank R.~Vitolo for his help with REDUCE and \textsl{CDE} softwares, to D.~Robertz for providing me with an updated version of Janet, and the Department of Mathematics and Physics ``E.~De Giorgi'' of University of Salento for the access to their computer server.

\appendix
\section{Coboundary conditions for $H^2_3(P_{LP})$}
We provide here the Coboundary condition used in the computation of $H^2_3(P_{LP})$ as explained in Paragraph \ref{sec:H23PLP}.

{\footnotesize
\begin{multline}\label{eq:JComp1}
4p^3q^2A^{111}_{11} - 10p^4qA^{111}_{12} + 6p^5A^{111}_{22} + 6p^2q^3A^{112}_{11} - 18p^3q^2A^{112}_{12} + 12p^4qA^{112}_{22}\\
- 6p^2q^3A^{122}_{12} + 6p^3q^2A^{122}_{22} - 2q^5A^{222}_{11} + 2pq^4A^{222}_{12}=0
\end{multline}
\begin{multline}\label{eq:JComp2}
-2p^3q^2A^{111}_{11} + 6p^4qA^{111}_{12} - 4p^5A^{111}_{22} + 6p^3q^2A^{112}_{12} - 6p^4qA^{112}_{22} + 6pq^4A^{122}_{11}+\\
  - 6p^2q^3A^{122}_{12} + 4q^5A^{222}_{11} - 6pq^4A^{222}_{12} + 2p^2q^3A^{222}_{22}=0
 \end{multline}\begin{multline}\label{eq:JComp3}
14p^3q^2A^{111}_{11} - 80p^4qA^{111}_{12} + 80p^5A^{111}_{22} - 3p^2q^3A^{112}_{11} - 36p^3q^2A^{112}_{12} + 84p^4qA^{112}_{22}\\ 
 + 12pq^4A^{122}_{11} + 12p^2q^3A^{122}_{12} + 49q^5A^{222}_{11} - 8pq^4A^{222}_{12} + 16p^5qB^{11,11}_{12} + 12p^3q^3B^{11,22}_{12}\\
  - 8p^3q^3B^{12,21}_{12} + 8p^2q^4B^{12,22}_{12} - 16p^3q^3B^{22,11}_{12} + 16p^2q^4B^{22,12}_{12} + 8p^2q^4B^{22,21}_{12} + 28pq^5B^{22,22}_{12} \\
  + 8p^5qD^{1,111}_{12} - 16p^6D^{1,111}_{22} + 8p^3q^3D^{1,221}_{12} + 40pq^5D^{1,222}_{11} - 16p^3q^3D^{1,222}_{22} - 16p^3q^3D^{2,111}_{11} + 16p^4q^2D^{2,111}_{12}\\
   - 16p^5qD^{2,111}_{22} + 40q^6D^{2,222}_{11} - 32pq^5D^{2,222}_{12} - 17p^3q^3\dev_qA^{111}_{11} + 52p^4q^2\dev_qA^{111}_{12} - 12p^5q\dev_qA^{111}_{22}\\
    + 3p^2q^4\dev_qA^{112}_{11} + 66p^3q^3\dev_qA^{112}_{12} - 60p^4q^2\dev_qA^{112}_{22} + 57pq^5\dev_qA^{122}_{11} - 24p^2q^4\dev_qA^{122}_{12} - 24p^3q^3\dev_qA^{122}_{22}\\
     - 11q^6\dev_qA^{222}_{11} - 14pq^5\dev_qA^{222}_{12} - 2p^4q^2\dev_pA^{111}_{11} + 8p^6\dev_pA^{111}_{22} - 18p^3q^3\dev_pA^{112}_{11} + 24p^4q^2\dev_pA^{112}_{12}\\
      - 24p^5q\dev_pA^{112}_{22} - 30p^2q^4\dev_pA^{122}_{11} + 12p^3q^3\dev_pA^{122}_{12} - 70pq^5\dev_pA^{222}_{11} + 28p^2q^4\dev_pA^{222}_{12}=0
    \end{multline}
    \begin{multline}\label{eq:JComp4}
    -78p^3q^2A^{111}_{11} + 480p^4qA^{111}_{12} - 560p^5A^{111}_{22} - 9p^2q^3A^{112}_{11} + 228p^3q^2A^{112}_{12} - 516p^4qA^{112}_{22}\\
     - 180pq^4A^{122}_{11} - 12p^2q^3A^{122}_{12} - 413q^5A^{222}_{11} + 120pq^4A^{222}_{12} - 48p^5qB^{11,11}_{12} - 156p^3q^3B^{11,22}_{12}\\
      + 64p^4q^2B^{12,11}_{12} + 40p^3q^3B^{12,21}_{12} - 104p^2q^4B^{12,22}_{12} + 176p^3q^3B^{22,11}_{12} - 80p^2q^4B^{22,12}_{12} - 40p^2q^4B^{22,21}_{12}\\
       - 172pq^5B^{22,22}_{12} + 24p^5qD^{1,111}_{12} + 16p^6D^{1,111}_{22} - 40p^3q^3D^{1,221}_{12} - 264pq^5D^{1,222}_{11} + 144p^3q^3D^{1,222}_{22}\\
        + 144p^3q^3D^{2,111}_{11} - 80p^4q^2D^{2,111}_{12} + 16p^5qD^{2,111}_{22} - 264q^6D^{2,222}_{11} + 224pq^5D^{2,222}_{12} + 93p^3q^3\dev_qA^{111}_{11}\\
         - 420p^4q^2\dev_qA^{111}_{12} + 252p^5q\dev_qA^{111}_{22} - 39p^2q^4\dev_qA^{112}_{11} - 474p^3q^3\dev_qA^{112}_{12} + 492p^4q^2\dev_qA^{112}_{22} - 357pq^5\dev_qA^{122}_{11}\\
          + 120p^2q^4\dev_qA^{122}_{12} + 216p^3q^3\dev_qA^{122}_{22} + 79q^6\dev_qA^{222}_{11} + 86pq^5\dev_pA^{222}_{12} + 34p^4q^2\dev_pA^{111}_{11} - 48p^5q\dev_pA^{111}_{12}\\
          + 24p^6\dev_pA^{111}_{22} + 210p^3q^3\dev_pA^{112}_{11} - 216p^4q^2\dev_pA^{112}_{12} + 120p^5q\dev_pA^{112}_{22} + 318p^2q^4\dev_pA^{122}_{11} - 204p^3q^3\dev_pA^{122}_{12}\\
           + 518pq^5\dev_pA^{222}_{11}- 268p^2q^4\dev_pA^{222}_{12}=0
\end{multline}
}
\section{Basis of $H^2_3(P_{LP})$} \label{app:H23PLP}
{\footnotesize
\begin{subequations}
\begin{align}
\{p{}_{\mlambda}p\}^{(1)}&= 2\lambda_1^3  + \lambda_1\left(-\frac{p_yq_x}{q^2} + \frac{2pq_yq_x}{q^3} + \frac{3q_x^2}{q^2} - \frac{pq_{xy}}{q^2} - \frac{2q_{xx}}{q}\right)\\\notag
&\quad+\lambda_2\left(\frac{p_xq_x}{q^2} - \frac{pq_{xx}}{q^2}\right)+ \frac{p_yq_x^2}{q^3} - \frac{3pq_yq_x^2}{q^4} - \frac{3q_x^3}{q^3} + \frac{2pq_xq_{xy}}{q^3}\\ \notag
&\quad - \frac{p_yq_{xx}}{q^2} + \frac{2pq_yq_{xx}}{q^3} + \frac{4q_xq_{xx}}{q^2}  - \frac{pq_{xxy}}{q^2} - \frac{q_{xxx}}{q},\\
\{p{}_{\mlambda}q\}^{(1)}&=2\lambda_1^2\lambda_2+ \lambda_2\left(\frac{p_y^2}{2q^2} + \frac{p_yp_x}{pq} - \frac{2p_yq_x}{q^2} + \frac{5q_x^2}{2q^2} - \frac{2q_{xx}}{q}\right)\\\notag
&\quad + \lambda_1\left(\frac{3q_yq_x}{q^2} - \frac{2q_{xy}}{q}\right)- \frac{p_y^2q_y}{q^3} + \frac{p_yp_{yy}}{q^2} - \frac{p_y^2p_x}{p^2q} - \frac{p_yq_yp_x}{pq^2}\\\notag
&\quad + \frac{p_{yy}p_x}{pq} + \frac{4p_yq_yq_x}{q^3} - \frac{2p_{yy}q_x}{q^2} - \frac{6q_yq_x^2}{q^3} + \frac{p_yp_{xy}}{pq} - \frac{2p_yq_{xy}}{q^2}\\\notag
&\quad + \frac{6q_xq_{xy}}{q^2}  + \frac{2q_yq_{xx}}{q^2}  - \frac{2q_{xxy}}{q}\\
\{q{}_{\mlambda}p\}^{(1)}&=2\lambda_1^2\lambda_2  + \lambda_2\left(\frac{p_y^2}{2q^2} + \frac{p_yp_x}{pq} - \frac{2p_yq_x}{q^2}   + \frac{5q_x^2}{2q^2} - \frac{2q_{xx}}{q}\right)\\\notag
&\quad + \lambda_1\left(\frac{3q_yq_x}{q^2} - \frac{2q_{xy}}{q}\right)- \frac{5q_yq_x^2}{q^3}+ \frac{3q_yq_{xx}}{q^2} - \frac{2q_{xxy}}{q}+\frac{4q_xq_{xy}}{q^2}\\\notag
\{q{}_{\mlambda}q\}^{(1)}&=2\lambda_1\lambda_2^2+ \lambda_1\left(\frac{2p_yq_y}{pq} - \frac{p_{yy}}{p}\right)+ \lambda_2\left(\frac{2p_{yy}}{q}-\frac{2p_yq_y}{q^2} + \frac{p_yp_x}{p^2} + \frac{4q_yq_x}{q^2} - \frac{p_{xy}}{p} - \frac{4q_{xy}}{q}\right)\\
 &\quad+\frac{2p_yq_y^2}{q^3} - \frac{2q_yp_{yy}}{q^2} - \frac{p_yq_{yy}}{q^2} + \frac{p_{yyy}}{q} - \frac{p_y^2p_x}{p^3} - \frac{p_yq_yp_x}{p^2q} + \frac{p_{yy}p_x}{p^2}\\\notag
 &\quad - \frac{p_yq_yq_x}{pq^2} - \frac{4q_y^2q_x}{q^3} + \frac{2q_{yy}q_x}{q^2} + \frac{p_yp_{xy}}{p^2} + \frac{q_yp_{xy}}{pq} + \frac{p_yq_{xy}}{pq} + \frac{4q_yq_{xy}}{q^2}  - \frac{p_{xyy}}{p} -\frac{2q_{xyy}}{q},
\end{align}
\end{subequations}}
and
{\footnotesize
\begin{subequations}
\begin{align}
\{p{}_{\mlambda}p\}^{(2)}&= \lambda_1\left(\frac{53p_yq_y}{4q^2} - \frac{53p_{yy}}{4q} - \frac{35p_yp_x}{2p^2} - \frac{qp_x^2}{p^3} + \frac{q_yq_x}{2q^2} + \frac{19p_xq_x}{2p^2}\right.\\\notag
&\quad\qquad\left. + \frac{35p_{xy}}{2p} - \frac{q_{xy}}{2q} + \frac{qp_{xx}}{2p^2} - \frac{9q_{xx}}{p}\right)\\\notag
&\quad+ \lambda_2\left(\frac{17p_yp_x}{pq} -\frac{7p_y^2}{q^2}  + \frac{p_x^2}{2p^2} + \frac{59p_yq_x}{4q^2} - \frac{11p_xq_x}{pq} - \frac{3q_x^2}{2q^2} - \frac{71p_{xy}}{4q} + \frac{17q_{xx}}{2q}\right)\\\notag
&\quad+\frac{7p_y^2q_y}{q^3} - \frac{7p_yp_{yy}}{q^2} - \frac{17p_y^2p_x}{2p^2q} - \frac{17p_yq_yp_x}{2pq^2} + \frac{17p_{yy}p_x}{2pq} + \frac{17p_yp_x^2}{p^3} + \frac{3qp_x^3}{2p^4} - \frac{28p_yq_yq_x}{q^3}\\\notag
&\quad + \frac{14p_{yy}q_x}{q^2} + \frac{11p_yp_xq_x}{2p^2q} + \frac{11q_yp_xq_x}{2pq^2} - \frac{10p_x^2q_x}{p^3} + \frac{q_yq_x^2}{q^3} + \frac{17p_yp_{xy}}{2pq} + \frac{31q_yp_{xy}}{2q^2}\\\notag
&\quad - \frac{17p_xp_{xy}}{p^2} - \frac{11q_xp_{xy}}{2pq} + \frac{14p_yq_{xy}}{q^2} - \frac{11p_xq_{xy}}{2pq} - \frac{q_xq_{xy}}{q^2} - \frac{31p_{xyy}}{2q} - \frac{35p_yp_{xx}}{4p^2}\\\notag
&\quad - \frac{3qp_xp_{xx}}{2p^3} + \frac{5q_xp_{xx}}{p^2} - \frac{4q_yq_{xx}}{q^2} + \frac{37p_xq_{xx}}{4p^2}  + \frac{35p_{xxy}}{4p} + \frac{4q_{xxy}}{q} + \frac{qp_{xxx}}{4p^2} - \frac{9q_{xxx}}{2p},
\end{align}
\begin{align}
\{p{}_{\mlambda}q\}^{(2)}&=-\lambda_2^2\left(\frac{p_x}{p}-\frac{11p_y}{2q}\right) + \lambda_1\left(\frac{7p_yq_y}{pq} -\frac{17p_y^2}{2p^2} + \frac{9q_y^2}{4q^2} - \frac{9q_{yy}}{4q} + \frac{qp_yp_x}{p^3} - \frac{q_yp_x}{2p^2}\right.\\\notag
&\quad\qquad\left. + \frac{17p_yq_x}{2p^2} - \frac{19q_yq_x}{4pq} - \frac{qp_xq_x}{2p^3} - \frac{17q_x^2}{8p^2} - \frac{qp_{xy}}{p^2} + \frac{3q_{xy}}{2p} + \frac{qq_{xx}}{2p^2}\right)\\\notag
&\quad+ \lambda_2\left(\frac{3p_y^2}{2pq} + \frac{3p_yq_y}{2q^2} - \frac{3p_{yy}}{q} - \frac{p_yp_x}{2p^2} + \frac{3q_yp_x}{4pq} + \frac{3p_yq_x}{4pq} + \frac{3q_yq_x}{2q^2} + \frac{p_xq_x}{4p^2}\right.\\\notag
&\quad\qquad\left. - \frac{3q_x^2}{4pq} - \frac{3q_{xy}}{4q}\right)-\frac{3p_y^3}{2p^2q} - \frac{3p_y^2q_y}{2pq^2} + \frac{3p_yp_{yy}}{pq} + \frac{3q_yp_{yy}}{2q^2} - \frac{3p_{yyy}}{2q} + \frac{3p_yq_yp_x}{4p^2q}\\\notag
&\quad - \frac{3q_y^2p_x}{4pq^2} - \frac{p_{yy}p_x}{2p^2} + \frac{3q_{yy}p_x}{4pq} - \frac{3qp_yp_x^2}{p^4} + \frac{3q_yp_x^2}{2p^3}  - \frac{3p_y^2q_x}{4p^2q} + \frac{3p_yq_yq_x}{4pq^2} - \frac{9q_y^2q_x}{2q^3}\\\notag
&\quad + \frac{3p_{yy}q_x}{4pq} + \frac{9q_{yy}q_x}{4q^2} + \frac{5p_yp_xq_x}{2p^3} - \frac{3q_yp_xq_x}{4p^2q} + \frac{3qp_x^2q_x}{2p^4} + \frac{3p_yq_x^2}{4p^2q} - \frac{5p_xq_x^2}{4p^3} \\\notag
&\quad - \frac{3q_yp_{xy}}{4pq} + \frac{2qp_xp_{xy}}{p^3} - \frac{5q_xp_{xy}}{4p^2} - \frac{3p_yq_{xy}}{4pq} + \frac{15q_yq_{xy}}{4q^2} - \frac{5p_xq_{xy}}{4p^2} - \frac{3q_xq_{xy}}{4pq} \\\notag
&\quad + \frac{qp_yp_{xx}}{p^3} - \frac{3q_yp_{xx}}{4p^2} - \frac{qq_xp_{xx}}{2p^3} - \frac{p_yq_{xx}}{p^2} + \frac{3q_yq_{xx}}{4pq} - \frac{qp_xq_{xx}}{p^3} + \frac{q_xq_{xx}}{p^2}  - \frac{qp_{xxy}}{2p^2}\\\notag
&\quad  + \frac{p_yp_{xy}}{2p^2}- \frac{3q_{xyy}}{2q}+ \frac{3q_{xxy}}{4p} + \frac{qq_{xxx}}{4p^2},
\end{align}
\begin{align}
\{q{}_{\mlambda}p\}^{(2)}&=\lambda_2^2\left(\frac{p_x}{p}-\frac{11p_y}{2q}\right)+ \lambda_1\left(\frac{7p_yq_y}{pq} -\frac{17p_y^2}{2p^2} + \frac{9q_y^2}{4q^2} - \frac{9q_{yy}}{4q} + \frac{qp_yp_x}{p^3} - \frac{q_yp_x}{2p^2}\right.\\\notag
&\quad\qquad\left. + \frac{17p_yq_x}{2p^2} - \frac{19q_yq_x}{4pq} - \frac{qp_xq_x}{2p^3} - \frac{17q_x^2}{8p^2} - \frac{qp_{xy}}{p^2} + \frac{3q_{xy}}{2p} + \frac{qq_{xx}}{2p^2}\right)\\\notag
&\quad + \lambda_2\left(\frac{3p_y^2}{2pq} + \frac{25p_yq_y}{2q^2} - \frac{14p_{yy}}{q} - \frac{5p_yp_x}{2p^2} + \frac{3q_yp_x}{4pq} + \frac{3p_yq_x}{4pq} + \frac{3q_yq_x}{2q^2} + \frac{p_xq_x}{4p^2}\right.\\\notag
&\quad\qquad\left. - \frac{3q_x^2}{4pq} + \frac{2p_{xy}}{p} - \frac{3q_{xy}}{4q}\right)-\frac{14p_yq_y^2}{q^3} + \frac{14q_yp_{yy}}{q^2} + \frac{7p_yq_{yy}}{q^2} - \frac{7p_{yyy}}{q}\\\notag
&\quad + \frac{20p_y^2p_x}{p^3} - \frac{17p_yq_yp_x}{2p^2q} - \frac{p_{yy}p_x}{p^2} - \frac{q_yp_x^2}{2p^3}  - \frac{17p_yq_yq_x}{2pq^2} - \frac{3q_y^2q_x}{q^3} + \frac{3q_{yy}q_x}{2q^2} \\\notag
&\quad + \frac{11q_yp_xq_x}{2p^2q} + \frac{11q_yq_x^2}{2pq^2} + \frac{5p_xq_x^2}{p^3} - \frac{20p_yp_{xy}}{p^2} + \frac{17q_yp_{xy}}{2pq} + \frac{qp_xp_{xy}}{p^3} + \frac{9q_xp_{xy}}{p^2}  \\\notag
&\quad+ \frac{3q_yq_{xy}}{q^2} - \frac{p_xq_{xy}}{2p^2} - \frac{11q_xq_{xy}}{2pq}  + \frac{p_{xyy}}{p} - \frac{3q_{xyy}}{2q} + \frac{q_yp_{xx}}{4p^2} + \frac{19p_yq_{xx}}{2p^2} - \frac{11q_yq_{xx}}{2pq}\\\notag
&\quad - \frac{19p_yp_xq_x}{p^3}+ \frac{17p_yq_{xy}}{2pq}- \frac{qp_xq_{xx}}{2p^3} - \frac{19q_xq_{xx}}{4p^2}  - \frac{qp_{xxy}}{2p^2} + \frac{3q_{xxy}}{4p} + \frac{qq_{xxx}}{4p^2},
\end{align}
\begin{align}
\{q{}_{\mlambda}q\}^{(2)}&=\lambda_1\left(\frac{2qp_y^2}{p^3} - \frac{4p_yq_y}{p^2} + \frac{3q_y^2}{2pq} - \frac{2qp_yq_x}{p^3} + \frac{2q_yq_x}{p^2} + \frac{qq_x^2}{2p^3}\right)\\\notag
&\quad+ \lambda_2\left(\frac{3p_y^2}{p^2} + \frac{3p_yq_y}{pq} + \frac{3q_y^2}{q^2} - \frac{4p_{yy}}{p} - \frac{3q_{yy}}{q} - \frac{p_yq_x}{p^2} - \frac{3q_yq_x}{2pq} - \frac{q_x^2}{4p^2} + \frac{2q_{xy}}{p}\right)\\\notag
&\quad + \frac{5p_yp_{yy}}{p^2}-\frac{3p_y^3}{p^3} - \frac{3p_y^2q_y}{2p^2q} - \frac{3p_yq_y^2}{2pq^2} - \frac{3q_y^3}{q^3}  + \frac{3q_yp_{yy}}{2pq} + \frac{3p_yq_{yy}}{2pq} + \frac{9q_yq_{yy}}{2q^2}\\\notag
&\quad - \frac{2p_{yyy}}{p} - \frac{3p_{yyy}}{2q} - \frac{3qp_y^2p_x}{p^4} + \frac{4p_yq_yp_x}{p^3} - \frac{3q_y^2p_x}{4p^2q} + \frac{2p_y^2q_x}{p^3} + \frac{3p_yq_yq_x}{4p^2q} - \frac{p_{yy}q_x}{2p^2}\\\notag
&\quad - \frac{3q_{yy}q_x}{4pq} + \frac{3qp_yp_xq_x}{p^4} - \frac{2q_yp_xq_x}{p^3} - \frac{3p_yq_x^2}{4p^3} - \frac{3qp_xq_x^2}{4p^4} + \frac{q_x^3}{4p^3}  + \frac{2qp_yp_{xy}}{p^3} - \frac{2q_yp_{xy}}{p^2}\\\notag
&\quad - \frac{qq_xp_{xy}}{p^3} - \frac{7p_yq_{xy}}{2p^2} + \frac{3q_yq_{xy}}{4pq} + \frac{3q_xq_{xy}}{4p^2}  + \frac{q_{xyy}}{p} - \frac{qp_yq_{xx}}{p^3} + \frac{q_yq_{xx}}{p^2} + \frac{qq_xq_{xx}}{2p^3}.
\end{align}
\end{subequations}}

\bibliography{biblio}

\end{document}